\newcommand{\sv}{stochastic volatility}
\newcommand{\bfS}{{\bf S}}
\newcommand{\stv}{\stackrel{v}{\rightarrow}}
\newcommand{\bbr}{{\mathbb R}}
\newcommand{\bfY}{{\bf Y}}
\newcommand{\kto}{k\to\infty}
\newcommand{\bth}{\begin{theorem}}
\newcommand{\ethe}{\end{theorem}}
\newcommand{\bre}{\begin{remark}\em }
\newcommand{\ere}{\end{remark}}
\newcommand{\rep}{representation}
\newcommand{\bfa}{{\mathbf a}}
\newcommand{\sign}{{\rm sign}}
\newcommand{\sn}{\sigma^{(n)}}
\newcommand{\ble}{\begin{lemma}}
\newcommand{\ele}{\end{lemma}}
\newcommand{\bbf}{{\mathcal F}}
\newcommand{\Ps}{{\mathbb P}_{\sigma}}
\def\Es{{\mathbb E}_{\sigma}}
\newcommand{\Pois}{{\rm Pois}}
\newcommand{\eqd}{\stackrel{d}{=}}
\newcommand{\wt }{\widetilde}
\newcommand{\ov}{\overline}
\newcommand{\pro}{probabilit}
\newcommand{\ld}{large deviation}
\newcommand{\bfv}{{\mathbf v}}
\newcommand{\bfA}{{\mathbf A}}
\newcommand{\bfT}{{\mathbf T}}
\newcommand{\bfR}{{\mathbf R}}
\newcommand{\bfD}{{\mathbf D}}
\newcommand{\bfN}{{\mathbf N}}
\newcommand{\bfM}{{\mathbf M}}
\newcommand{\bfF}{{\mathbf F}}
\newcommand{\bfQ}{{\mathbf Q}}
\newcommand{\bfI}{{\mathbf I}}
\newcommand{\bfX}{{\mathbf X}}
\newcommand{\fct}{function}
\newcommand{\slvary}{slowly varying}
\newcommand{\regvar}{regular variation}
\newcommand{\regvary}{regularly varying}
\newcommand{\st}{such that}
\newcommand{\std}{\stackrel{\rm d}{\rightarrow}}
\newcommand{\stp}{\stackrel{\P}{\rightarrow}}
\newcommand{\la}{\lambda}
\newcommand{\ds}{distribution}
\newcommand{\beao}{\begin{eqnarray*}}
\newcommand{\eeao}{\end{eqnarray*}}
\newcommand{\beam}{\begin{eqnarray}}
\newcommand{\eeam}{\end{eqnarray}}
\newcommand{\barr}{\begin{array}}
\newcommand{\earr}{\end{array}}
\definecolor{darkblue}{rgb}{.1, 0.1,.8}
\definecolor{darkgreen}{rgb}{0,0.8,0.2}
\definecolor{darkred}{rgb}{.8, .1,.1}
\newcommand{\bco}{\begin{corrolary}}
\newcommand{\eco}{\end{corrolary}}
\newcommand{\E}{\mathbb{E}}
\renewcommand{\P}{\mathbb{P}}
\newcommand{\1}{\mathbf{1}}
\newcommand{\R}{\mathbb{R}}
\newcommand{\N}{\mathbb{N}}
\newcommand{\bfC}{{\mathbf C}}
\DeclareMathOperator{\Proj}{Proj}
\newcommand{\X}{{\mathbf X}}
\newcommand{\Y}{{\mathbf Y}}
\newcommand{\A}{{\mathbf A}}
\newcommand{\bfZ}{{\mathbf Z}}
\newcommand{\twonorm}[1]{\|#1\|_2}
\newcommand{\frobnorm}[1]{\|#1\|_F}
\newcommand{\ltwonorm}[1]{\|#1\|_{\ell_2}}
\newcommand{\vep}{\varepsilon}
\newcommand{\nto}{n \to \infty}
\newcommand{\xto}{x \to \infty}
\newcommand{\rhs}{right-hand side}
\newcommand{\ts}{time series}
\newcommand{\tsa}{\ts\ analysis}
\newcommand{\fidi}{finite-dimensional distribution}
\newcommand{\rv}{random variable}
\newcommand{\diag}{\operatorname{diag}}
\newcommand{\bfe}{{\mathbf e}}
\newcommand{\ex}{{\rm e}\,}
\def\tag{\refstepcounter{equation}\leqno }
\newtheorem{lemma}{Lemma}[section]
\newtheorem{theorem}[lemma]{Theorem}
\newtheorem{remark}[lemma]{Remark}
\newcommand{\cip}{\stackrel{\P}{\rightarrow}}
\newcommand{\var}{{\rm var}}
\newcommand{\cov}{{\rm cov}}
\newcommand{\as}{{\rm a.s.}}
\newcommand{\evt}{extreme value theory}
\newcommand{\pp}{point process}
\newcommand{\con}{convergence}
\newcommand{\seq}{sequence}
\newcommand{\ms}{measure}
\newcommand{\asy}{asymptotic}
\begin{document}

\begin{frontmatter}

\title{The eigenstructure of the sample covariance matrices of high-dimensional stochastic volatility models with heavy tails}
\runtitle{Sample covariance matrices of stochastic volatility models with heavy tails}


\author{\fnms{Johannes} \snm{Heiny}\ead[label=e1]{johannes.heiny@rub.de}}
\address{Department of Mathematics,\\
Ruhr-University Bochum,\\
Universit{\"a}tsstrasse 150,\\
D-44801 Bochum, Germany\\
\printead{e1}}
\and
\author{\fnms{Thomas} \snm{Mikosch}\ead[label=e2]{mikosch@math.ku.dk}}
\address{Department  of Mathematical Sciences,\\
University of Copenhagen,\\
Universitetsparken 5,\\
DK-2100 Copenhagen, Denmark\\
\printead{e2}}

\thankstext{}{Johannes Heiny thanks the Villum Foundation for 
partial financial support by Grant No 11745 ``Ambit fields: Probabilistic properties and statistical inference''. Thomas Mikosch's research is
generously supported by an Alexander von Humboldt Research Award.}
\affiliation{Ruhr-University Bochum and University of Copenhagen}

\runauthor{J.~Heiny and T.~Mikosch}

\begin{abstract}
We consider a $p$-dimensional time series 
where the dimension $p$ increases with the sample size $n$. The resulting 
data matrix $\X$ follows a stochastic volatility model: each entry consists of a positive
random volatility term multiplied by an independent noise term. The volatility multipliers introduce dependence in each row and across the rows. We study the asymptotic behavior of the eigenvalues and eigenvectors of the sample covariance 
matrix $\X\X'$ under a regular variation assumption on the noise. In particular, we prove Poisson convergence for the point process of the centered and normalized eigenvalues and derive limit theory for functionals acting on them, such as the trace. We prove related results for stochastic volatility models with additional linear 
dependence structure and for stochastic volatility models where the time-varying volatility terms are extinguished with high probability when $n$ increases. We provide explicit approximations of the eigenvectors which are of a strikingly simple structure. The main tools for proving these results are large deviation theorems for heavy-tailed time series, advocating a unified approach to the study of 
the eigenstructure of heavy-tailed random matrices.
\end{abstract}

\begin{keyword}[class=MSC]
\kwd[Primary ]{60B20}
\kwd[; secondary ]{60F05 60F10 60G10 60G55 60G70}
\end{keyword}

\begin{keyword}
\kwd{Regular variation}
\kwd{sample autocovariance matrix} 
\kwd{dependent entries}
\kwd{largest  eigenvalues}
\kwd{trace}
\kwd{point process}
\kwd{convergence}
\kwd{cluster Poisson limit}
\kwd{infinite variance stable limit}
\kwd{Fr\'echet distribution}
\kwd{large deviations}
\end{keyword}

\end{frontmatter}

\section{The stochastic volatility model}\setcounter{equation}{0}

Stochastic volatility models are popular in econometrics \cite{bauwens:hafner:laurent:2012}, mathematical finance \cite{andersen:davis:kreiss:mikosch:2009, fouque:papanicolaou:sircar:2011, fouque:sircar:2017} where they are used for option and derivative securities pricing, insurance mathematics \cite{cui:feng:mckay:2017, li:rong:zhao:2017}, time series \cite{mikosch:rezapour:2013, davis:mikosch:2001}, dependence modeling \cite{caviccioli:2017} and many other applied research areas. In a classical Black--Scholes framework the volatility is assumed constant. Empirical studies, however, have shown that many observed features of implied volatility surfaces, such as the so-called volatility smile, can only be explained by assuming a stochastic or even non-stationary volatility sequence over time; see for example the discussion in \cite{shephard:andersen:2009,shephard:2005}. Therefore a wide variety of stochastic volatility models has been proposed and well studied over the last few years. Stochastic volatility
models are heavily used within the fields of financial economics and mathematical finance to capture the impact of time-varying volatility on financial markets and decision making. Time-varying volatility is endemic in financial markets.
This was observed early on, for example by Mandelbrot \cite{mandelbrot:1963},
Fama \cite{fama:1965}, and Black and Scholes \cite{black:scholes:1972}.
\par 
The aforementioned literature on \sv\ 
models deals with univariate or low-dimensional multivariate 
time series. Here we focus on a high-dimensional \sv\ \ts\ whose dimension may grow  with the sample size.
To be precise, we study a $p$-dimensional \sv~ time series, and 
assuming that $p$ is large, we analyze the dependence structure of $n$ observations from this time series via spectral properties of the sample covariance matrix. We discuss two cases: a stochastic volatility field with dependence and whose marginal distribution does not change over time, and an iid stochastic volatility field with time-varying marginal distribution, both under the assumption of observations coming from a distribution with infinite fourth moment. This is quite a typical situation for financial and actuarial time series; see for example the Danish fire insurance data considered in \cite[Example 4.2]{resnick:2007}, emerging market stock returns \cite{ling:2005, hill:2013} and exchange rates data \cite{hill:2013b}.
For such time series it is also common to study so--called tail risk measures to describe the impact of extreme scenarios \cite{kelly:jiang:2014, bollerslev:todorov:xu:2015}.

In the first part of this paper, we consider the $p\times n$-dimensional  data matrix 
\beao
\bfX=\bfX_n=\big(X_{it}\big)_{i=1,\ldots,p;t=1,\ldots,n}\,,
\eeao
where $(X_{it})$ has the structure of a {\em \sv\ model}, i.e.,
\beam\label{eq:svmodel}
X_{it}=\sigma_{it}\,Z_{it}\,,\qquad i,t=1,2,\ldots\,,
\eeam
and $(\sigma_{it})$ is a strictly stationary random field of non-negative \rv s  
independent of the iid random field $(Z_{it})$. In Section~\ref{sec:more}, we introduce additional dependence in the \sv\ model. In what follows, $X,\sigma,Z,$ denote generic elements of these fields.
Stochastic volatility models are common in financial \tsa ; see for example \cite{andersen:davis:kreiss:mikosch:2009}. The present model
is an extension allowing for dependence through time and across the rows of the data matrix. It is convenient to 
think of \eqref{eq:svmodel} as a model where each row stands for a \ts\ of log-returns of
a speculative price series from a large portfolio, e.g. a stock index such as the 
Standard \& Poors 500 where each of the 500 rows of $\X$ could represent 
the log-returns
of the stock price of a particular US-based company in a given period of time.
\par
We will
study the {\em eigenstructure}, that is eigenvalues and eigenvectors, of the $p\times p$ {\em sample covariance matrix} $\bfS=\bfX\bfX'$ with entries
\beao
S_{ij}= \sum_{t=1}^n X_{it}X_{jt}\,, \quad i,j=1,\ldots,p\,,
\eeao
under the assumption that the dimension $p=p_n$ converges to infinity together with the sample size $n$. 
In what follows, we drop the double index for the diagonal entries $S_{ii}$ and simply write $S_i$.
In the model \eqref{eq:svmodel} the dependence
across the rows and through time is described by the  structure of the 
volatility field $(\sigma_{it})$. 
We will assume that the noise variable
$Z$ is {\em heavy-tailed} in the sense that it satisfies the \regvar\ condition
\beam\label{eq:regvar}
\P(Z>x)\sim q_+ \dfrac{L(x)}{x^{\alpha}}\quad\mbox{and}\quad  \P(Z< -x)\sim q_-
\dfrac{L(x)}{x^{\alpha}}\,,\qquad \xto\,,
\eeam
for some $\alpha\in(0,4)$, constants $q_+,q_-\ge  0$ \st\  $q_++q_-=1$, and a \slvary\ \fct\ $L$. 
We  assume $\E[ Z]=0$ whenever $\E[|Z|]<\infty$
and also that the non-negative $\sigma$ has a much lighter tail than $Z$ in the sense that all moments of $\sigma$ are finite.
\par
The considered random field $(X_{it})$ is flexible as regards second order dependence. 
If $\alpha>1$, 
we have $\cov(X_{it},X_{js})=0$ 
for $(i,t)\ne (j,s)$.  On the other hand,
$\cov(|X_{it}|^r,|X_{js}|^r)$, $r>0$, may decay arbitrarily slowly to zero when $|i-j|$ or $|t-s|$ goes to infinity, provided these covariances exist. Arbitrary decay rates can be achieved, 
for example, by assuming that $(\log \sigma_{it})$ is a stationary Gaussian field with a suitable
covariance structure. As a matter of fact, a large part of the literature on \sv\ \ts\ models
deals with the case when the log-volatility is stationary Gaussian; 
see \cite{andersen:davis:kreiss:mikosch:2009} for surveys on the topic \sv . 
\par
Thanks to \regvar\ and the iid-ness of the noise $(Z_{it})$,
the extremal dependence structure of $(X_{it})$ is characterized by the fact that the \fidi s of
$(X_{it})$ are multivariate \regvary\ with index $\alpha$ and have  \asy ally independent marginals; 
we refer to \cite{resnick:1987,resnick:2007,buraczewski:damek:mikosch:2016} for introductions
to multivariate \regvar . 
Indeed, applications of Breiman's lemma (Lemma B.5.1 in \cite{buraczewski:damek:mikosch:2016}) imply that
\beam\label{eq:breim}
\P( \pm \sigma_{it}Z_{it}>x)\sim \E [\sigma^\alpha]\,\P(\pm Z>x)\,,\qquad \xto\,.
\eeam
Thus the marginal \ds s are \regvary\ with index $\alpha$. Moreover,
for $(i,t)\ne (j,s)$, by another Breiman argument,
\beao
\dfrac{\P(\sigma_{it}|Z_{it}| >x\,,\sigma_{js}|Z_{js}|>x)}{\P(|Z|>x)}&=&
\dfrac{\P(\min(\sigma_{it}|Z_{it}|,\sigma_{js}|Z_{js}|)>x)}{\P(|Z|>x)}\\
&\le &\dfrac{\P(\max(\sigma_{it},\sigma_{js})\,\min(|Z_{it}|,|Z_{js}|)>x)}{\P(|Z|>x)}\\
&\sim & \E\big[\max(\sigma_{it},\sigma_{js})^\alpha\big]\,\P(|Z|>x) \to 0\,,\qquad \xto\,.
\eeao
This means that $X_{it}$ and $X_{js}$ are \asy ally independent in the sense of extreme value theory. 
Writing
\beao
\bfX^{(d)}= (X_{it})_{i,t=1,\ldots,d}\,,\qquad \bfZ^{(d)}= (Z_{it})_{i,t=1,\ldots,d}\,,
\qquad d\ge 1\,,
\eeao
the previous calculations on the marginals combined with standard arguments 
from \regvar\ calculus (see \cite{resnick:1987,resnick:2007,buraczewski:damek:mikosch:2016}) ensure that
\beao
\dfrac{\P(x^{-1}\bfZ^{(d)} \in\cdot)}{\P(|Z|>x)}\stv \nu_\alpha(\cdot)\,,\quad
\dfrac{\P(x^{-1}\bfX^{(d)} \in\cdot)}{\P(|Z|>x)}\stv \E [\sigma^\alpha]\,\nu_\alpha(\cdot)\,,\qquad \xto\,.
\eeao
Here $\stv$ denotes vague \con\ in $\ov \bbr^{d\times d}\backslash\{\bf0\}$, $\ov \bbr=\bbr\cup\{\infty,-\infty\}$,  the limiting \ms\ $\nu_\alpha$ is concentrated on the axes,  and its
restriction to any of the axes has Lebesgue density given by
\beao
\alpha \,|x|^{-\alpha-1}\,\big[q_+\,\1 (x>0)+q_-\,\1 (x<0)\big]\,.
\eeao
The fact that $\nu_\alpha$ is concentrated on the axes is another way of defining 
\asy\ independence of the components of $\bfX$.
\par
Since we are interested in the sample covariance matrix $\bfS$ in the heavy-tailed case we observe that
its diagonal entries $S_i=\sum_{t=1}^n X_{it}^2$ and off-diagonal entries $S_{ij}=\sum_{t=1}^nX_{it}X_{jt}$ for $i\ne j$ have rather distinct tails.
A first indication is the fact that, on one hand, by a Breiman argument,
\beam\label{eq:tail1}
P(X^2>x) \sim  \E[\sigma^\alpha] \,\P(Z^2>x)\sim \E[\sigma^\alpha] \,x^{-\alpha/2}\, L(\sqrt{x})\,,
\eeam
while, on the other hand, by a result in Embrechts and Goldie \cite{embrechts:goldie:1980}, 
for independent copies $Z_1,Z_2$ of $Z$,
\beam\label{eq:z1z2}
\P( Z_1Z_2>x)\sim  \wt q_+ \dfrac{\ell(x)}{x^\alpha}\quad\mbox{and}\quad\P( Z_1Z_2<-x)\sim  \wt q_- \dfrac{\ell(x)}{x^\alpha}\,,
\eeam
where $\ell$ is a \slvary\ \fct , $\wt q_+,\wt q_-\ge 0$ and $\wt q_++\wt q_-=1$.
Hence by Breiman's lemma, 
 for $(i,t)\ne (j,s)$,
\beam\label{eq:tail2}
\P(\pm X_{it}X_{js}>x)\sim \E \big[(\sigma_{it}\sigma_{js})^{\alpha}\big]\,\P(\pm Z_1Z_2>x)\,,\qquad \xto\,.
\eeam 
We assume $\alpha\in (0,4)$. In this case, \eqref{eq:tail1} and \eqref{eq:tail2} imply  
that the diagonal entries $(S_i)$ of $\bfS$ dominate all off-diagonal elements $S_{ij}$ 
in the sense that
the \asy\ behavior of the eigenvalues of  $\bfS$ is completely determined by the diagonal  
$\diag(\bfS)$ of $\bfS$. This phenomenon is described in Theorem~\ref{thm:diag}. 
It is well known in the iid case when $p=p_n\to\infty$ 
(see \cite{davis:mikosch:pfaffel:2016,davis:heiny:mikosch:xie:2016,heiny:mikosch:2017:iid}). Pioneering work for the largest eigenvalue of $\bfS$ under a more restrictive growth condition on $p$ and $\alpha\in (0,2)$
is due to Soshnikov~\cite{soshnikov:2004,soshnikov:2006}
and Auffinger et al.~\cite{auffinger:arous:peche:2009}.
For constant $p$ the same 
property was observed for the \sv\ model \eqref{eq:svmodel} in Jan\ss en et al. \cite{janssen:mikosch:rezapour:xie:2017}.
\par
The diagonal elements $S_i$ are the eigenvalues of the matrix 
$\diag(\bfS)$. They approximate the eigenvalues of the sample covariance matrix $\bfS$; see
\eqref{eq:weyl}. Given this approximation, 
\ld\ results from Mikosch and Wintenberger \cite{mikosch:wintenberger:2016,mikosch:wintenberger:2013}
for the partial sums $S_i$ are used to derive the \con\ of the point processes 
of the centered and normalized eigenvalues of $\bfS$ towards an inhomogeneous Poisson
process; see Theorem~\ref{thm:pp}.
A similar point process convergence in the iid case under the assumption that $p$ and $n$ are proportional was proved in \cite{soshnikov:2004,soshnikov:2006} for $\alpha\in (0,2)$ and later extended in \cite{auffinger:arous:peche:2009} to $\alpha \in [2,4)$. In their proofs the authors used truncation techniques and a challenging combinatorial approach.

Based on Theorem~\ref{thm:pp}, the \con\ of the \pp\ of the eigenvalues 
in the case $\alpha\in (0,4)$ allows one to 
derive limit theory for the largest eigenvalues of $\bfS$ and \fct als acting on them. 
In particular, the centered and normalized largest eigenvalue of $\bfS$ 
converges to a Fr\'echet distributed  \rv\ with parameter $\alpha/2$. 
In \cite{heiny:mikosch:2017:iid}, this was shown for an iid random field $(X_{it})$. 
\par
In Section~\ref{sec:more}, we introduce additional dependence in the \sv\ model. We consider the $p\times p$ matrix
$\Y=\A^{1/2}\X$ where $\bfA=\bfA_n$ are deterministic positive definite $p\times p$ matrices with
uniformly bounded spectra. In Theorem~\ref{thm:mainsigma2} it is essentially shown that the
eigenvalues of $\bfY\bfY'$ are approximated by those of the matrix $\diag(\bfS)\diag(\bfA)$
and Theorem~\ref{thm:eigenvector} yields an approximation for the 
eigenvectors of $\bfY\bfY'$.
\par
In Section~\ref{sec:thin}, we consider another modification of the \sv\ model \eqref{eq:svmodel}.
We assume that the distribution of $\sigma$ is a \fct\ of $n$ and write $\sigma^{(n)}$ for a generic random variable from the iid random field $(\sigma_{it}^{(n)})$, $n\ge 1$.
The possible values of $\sigma^{(n)}$, $n\ge 1$, are $0=s_0<s_1\cdots <s_m$ for some $m\ge 1$ and we assume
that $q_0^{(n)}=\P(\sigma^{(n)}=0)\to 1$ and that the limits $\lim_{\nto}n\,\P(\sigma^{(n)}=s_j)>0$, $j=1,\ldots,m$, 
exist (finite or infinite). This means that there is a large  \pro y of extinction 
of the iid entries  $X_{it}^{(n)}=\sigma_{it}^{(n)}Z_{it} $ 
of the data matrix $\bfX_n$ when $n$ is large. This model was 
introduced in \cite{auffinger:tang:2016} for $m=1$, $1-q_0^{(n)}=n^{-v}$ 
for some $v\in (0,1]$ and $p/n\to \gamma\in (0,\infty)$. In Theorem~\ref{thm:diagsigma}, we again show that the eigenvalues of
$\bfS$ are \asy ally given by $\diag(\bfS)$. The main difference to Theorem~\ref{thm:diag} is that the
normalization needed for the eigenvalues of $\bfS$ is of  significantly 
smaller magnitude depending on the speed at which  $q_0^{(n)}$ 
approaches 1.  
The method of proof of our results is different from those
in \cite{auffinger:tang:2016} and works for more general growth rates of $p$; we again use \ld\ techniques and exploit the approximation
of the eigenvalues of $\bfS$ by those of $\diag(\bfS)$. We also derive the \pp\ \con\
of the eigenvalues of $\bfS$, find approximations for the eigenvectors and we derive results for $\bfY=\bfA^{1/2} \bfX$ where $\bfA$ is a deterministic positive
definite matrix.
\par
In Sections~\ref{sec:proofthm21}--\ref{sec:proofthm3.2}, we provide the proofs 
of the 
aforementioned results. 

\subsection*{Some basic notation}
\subsubsection*{Eigenvalues and eigenvectors} For any $p\times p$ positive semidefinite matrix $\bfC$, we denote its ordered eigenvalues by 
\begin{equation*}
\la_1(\bfC) \ge \cdots \ge \la_p(\bfC)\,.
\end{equation*}
If, for $k\le p$, the multiplicity of $\la_k(\bfC)$ is 1, then there exists a unique unit eigenvector $\bfv_k(\bfC)$ associated with $\la_k(\bfC)$, i.e. $\ltwonorm{\bfv_k(\bfC)}=1$ (Euclidean norm) and 
\begin{equation*}
\bfC \bfv_k(\bfC)= \la_k(\bfC) \bfv_k(\bfC)\,,
\end{equation*}
such that the first non-zero coordinate of $\bfv_k(\bfC)$ is positive. We will use the latter orientation convention throughout this paper for eigenvectors.

\subsubsection*{Spectral norm and diagonal matrix}
For any $p\times p$ matrix $\bfC$, the spectral norm $\twonorm{\bfC}$ is $\sqrt{\la_1(\bfC \bfC')}$. Moreover, $\diag(\bfC)$ denotes the diagonal matrix which has the same diagonal as $\bfC$. Sometimes we will simply refer to $\diag(\bfC)$ as the diagonal of $\bfC$.
\subsubsection*{Normalization} Typically, we use a \seq\ $(a_k)$ satisfying 
$k\,\P(|Z|>a_k)\to 1$ as  $\kto$ for the normalization of eigenvalues.

\section{Convergence results for the \sv\ model}\label{sec:mainresults}\setcounter{equation}{0}
We start with a fundamental approximation of the sample covariance matrix $\bfS$ in spectral norm.
\bth\label{thm:diag} Consider the \sv\ model \eqref{eq:svmodel}. We assume the following conditions:
\begin{enumerate}
\item A growth condition for  the integer sequence $p=p_n\to\infty$:
\begin{equation}\label{eq:p}
p=p_n=n^\beta \ell(n), \quad n\ge1,\tag{$C_p(\beta)$}
\end{equation}
where $\ell$ is a slowly varying function and $\beta\in (0,1]$. 
\item
The \regvar\ condition \eqref{eq:regvar} on $Z$ for some $\alpha\in (0,2)\cup (2,4)$ and 
$\E[Z]=0$ if $\E[|Z|]<\infty$.
\item Finiteness of all moments $\E [\sigma^{r}]$ for $r>0$.
\end{enumerate}
Then
\beao
a_{np}^{-2}\twonorm{\bfS-\diag(\bfS)} \stp 0\,,\qquad\nto\,.
\eeao
\ethe
This theorem provides a first indication that the spectral properties of $\bfS$ might be similar to those of $\diag(\bfS)$ which has a simple structure. 
The normalizing sequence is of the form $a_{np}^2=(np)^{2/\alpha} \ell_1(np)$ for some slowly varying function $\ell_1$. 
Note that the provided approximation of $\bfS$ does not hold for $\alpha>4$ when the fourth moment of $X$ is finite. In fact, one obtains completely different types of limit results for the eigenstructure of $\bfS$; see \cite{davis:heiny:mikosch:xie:2016,heiny:mikosch:2017:iid} and the monograph \cite{bai:silverstein:2010} for a detailed overview and more references. The approximation of the sample covariance matrix by its diagonal is featured in the heavy-tailed case only.

The proof of Theorem~\ref{thm:diag} is provided in Section~\ref{sec:proofthm21}.
\bre\label{rem:other}
Assume $\beta>1$ in \eqref{eq:p}. If we keep the remaining assumptions of Theorem~\ref{thm:diag},
the same proof as for the latter result yields
 \beao
a_{np}^{-2}\,\|\bfX'\bfX-\diag(\bfX'\bfX)\|_2 \stp 0\,,\qquad\nto\,.
\eeao
On the other hand, the non-zero eigenvalues of $\bfS=\bfX\bfX'$ and   $\bfX'\bfX$ are the same. This observation
is useful when determining the \asy\ behavior of the eigenvalues of $\bfS$ in the case $\beta>1$.
\ere
In view of Weyl's inequality (see \cite{bhatia:1997}), we may conclude from 
Theorem~\ref{thm:diag} that
\beam\label{eq:weyl}
a_{np}^{-2}\max_{i=1,\ldots,p} \big|\la_{i}(\bfS)-\la_i(\diag(\bfS))\big|\le 
a_{np}^{-2}\twonorm{\bfS-\diag(\bfS)} \stp 0\,,\qquad\nto\,.
\eeam
Using \eqref{eq:weyl}, it is possible to study the \asy\ behavior of the 
\pp\ of the scaled eigenvalues $(a_{np}^{-2}\la_i(\bfS))_{i=1,\ldots,p}$, as formulated in the next theorem.
\bth\label{thm:pp}
Assume the conditions of Theorem~\ref{thm:diag}. In addition, we assume the following conditions.
\begin{enumerate}
\item[\rm (4)] 
$(\sigma_{it})$ is a strictly stationary ergodic field and the sequence $(\sigma_{1t}^2)$ is strongly mixing with rate \fct\ 
$\alpha_j\le c j^{-a}$ for some 
constants $c>0$ and $a>1$.
\item[\rm (5)] 
$\sigma^2\le M$ a.s.
for some constant $M>0$. 
\end{enumerate}
Then we have the following weak \con\ result for the \pp es
with state space $\bbr\backslash \{0\}$:
\beao
N_n= \sum_{i=1}^p \vep_{a_{np}^{-2} (\la_i(\bfS)-c_n)}\std N
\,,\qquad\nto\,,
\eeao 
where $N$ is a Poisson process on $\bbr\backslash \{0\}$ with mean \ms\ $\mu_\alpha(x,\infty)=\E[\sigma^\alpha]\,x^{-\alpha/2}$
and $\mu_\alpha(-\infty,-x)=0$ for  $x>0$. Furthermore, $\vep_x$ denotes the Dirac measure in the point $x$ and  
\beam\label{eq:cn}
c_n=\left\{ \barr{ll}
0\,,& \mbox{if $\alpha\in (0,2)$,}\\
n\,\E [X^2]\,,&\mbox{if $\alpha\in (2,4)$}\,. 
\earr\right.\,
\eeam
\ethe
The proof will be given in Section~\ref{sec:proofthm24}.
We notice that this result is the same as for the iid field $((\E[\sigma^\alpha])^{1/\alpha}\,Z_{it})$; see \cite[Theorem 3.10 and Lemma 3.8]{heiny:mikosch:2017:iid}. This means that dependence within the light-tailed $\sigma$-field influences the limiting point process only through a multiplicative factor. 
\bre In view of Remark~\ref{rem:other}, an analogous result holds if $\beta>1$ in \eqref{eq:p}.
\ere
The limiting process in Theorem \ref{thm:pp} has \rep
\beam\label{eq:N1}
N=\sum_{i=1}^\infty \vep_{(\Gamma_i/\E[\sigma^\alpha])^{-2/\alpha}}\,,
\eeam
where $\Gamma_i=E_1+\cdots + E_i$ for iid standard exponential \rv s $(E_i)$. From this result it follows that
\beam\label{eq:N2}
a_{np}^{-2} (\E[\sigma^\alpha])^{2/\alpha}\,\big(\la_1(\bfS) -c_n,\ldots,\la_{k}(\bfS)-c_n\big)\std \big(\Gamma_1^{-2/\alpha}\,,\ldots,\Gamma_k^{-2/\alpha}\big) 
\eeam
for fixed $k\ge 1$. In particular,
\beao
a_{np}^{-2} (\E[\sigma^\alpha])^{2/\alpha}\,\big(\la_{1}(\bfS) -c_n\big)\std \Gamma_1^{-2/\alpha}\,,
\eeao
and the limiting variable has a Fr\'echet distribution with parameter $\alpha/2$.
Now one can apply the folklore from \evt\ to derive limit theory for continuous  \fct als of $(\la_{1}(\bfS),\ldots,\la_{k}(\bfS))$. Moreover, a continuous mapping argument also shows that
\beao
a_{np}^{-2}\sum_{i=1}^p (\la_{i}(\bfS)-c_n) = a_{np}^{-2}\big({\rm trace} \big(\bfS\big)-p c_n\big) 
\eeao   
converges in \ds\ to a totally skewed to the right $\alpha/2$-stable limit; see \cite{davis:heiny:mikosch:xie:2016, heiny:mikosch:2017:iid,davis:mikosch:pfaffel:2016}.


\section{Introducing more dependence in the stochastic volatility model}\setcounter{equation}{0}\label{sec:more}
In this section, we will extend our stochastic volatility model by including some additional dependence between the  entries of $\bfX$. 

To this end, let $\A=\A_n$ be a sequence of deterministic, positive definite $p\times p$ matrices with bounded spectrum, that is $(\twonorm{\A_n})$ is uniformly bounded. 
If the entries of $\X$ are independent with mean $0$ and variance $1$, then the columns of 
\begin{equation}\label{eq:defY}
\Y=\A^{1/2}\X\,
\end{equation} 
have covariance matrix $\A$. Here $\A^{1/2}$ is the symmetric, positive definite square root of~$\A$. 
\begin{remark}{\em
The positive definite $\bfA$ can be diagonalized: $\bfA=\mathbf{O} \bfT \mathbf{O}'$ where $\mathbf{O}$ is an orthogonal matrix and $\bfT$ is diagonal and positive definite. By assumption, $\bfT^{1/2}$ exists and we get $\bfA^{1/2}= \mathbf{O} \bfT^{1/2} \mathbf{O}'$.
}\end{remark}

The transformation \eqref{eq:defY} is very important in multivariate statistics since it creates a sample with  dependence structure $\bfA$ from an iid sample and vice versa.

Now assume that $\X$ follows the stochastic volatility model \eqref{eq:svmodel}. While the dependence among the $(X_{it})$ is only due to the dependence among the light-tailed $(\sigma_{it})$, the dependence of the heavy-tailed components in the entries of $\Y=(Y_{it})$ is determined by $\A$.   
Our main goal in this section is to approximate the eigenvalues and eigenvectors of 
\beao
\Y\Y' =\A^{1/2}\X \X' \A^{1/2}=\A^{1/2}\bfS \A^{1/2}\,.
\eeao

As regards eigenvalues, we note that the spectra of $\A^{1/2}\bfS \A^{1/2}$ and $\bfS \A$ coincide.
Matrices, such as $\bfS \A$, which are a product of a sample covariance matrix and the inverse of another covariance matrix are called {\em multivariate $F$-matrices} \cite{bai:silverstein:2010}. The limiting spectral distribution of $F$-matrices was studied among others in \cite{yin:bai:krishnaiah:1983}. $F$-matrices also play an important role in MANOVA. Wachter \cite{wachter:1980} analyzed the generalized eigenvalue problem
\begin{equation}\label{eq:genev}
\det (\bfS -\la \A^{-1})=0\,,
\end{equation}
where $\A$ can be stochastic but is independent of $\X$. Since $\A$ is positive definite its inverse can be interpreted as a covariance matrix. Solutions $\la$ of \eqref{eq:genev} are eigenvalues of $\A^{1/2}\bfS \A^{1/2}$, see \cite{debashis:aue:2014,bai:silverstein:2010}.

The entries of the matrix $\Y$ possess a quite general dependence structure. Nevertheless the approximation of the eigenvalues of the associated sample covariance matrix $\Y\Y'$ is straightforward.  
\begin{theorem}\label{thm:mainsigma}
We consider the matrix $\Y=\A^{1/2} \X$, where $\X$ follows the stochastic volatility model \eqref{eq:svmodel}. We assume the following conditions:
\begin{itemize}
\item The growth condition \eqref{eq:p} with $\beta\in (0,1]$. 
\item
The \regvar\ condition \eqref{eq:regvar} on $Z$ for some $\alpha\in (0,2)\cup (2,4)$ and 
$\E[Z]=0$ if $\E[|Z|]<\infty$.
\item Finiteness of all moments $\E [\sigma^{r}]$ for $r>0$.
\item $\bfA=\A_n$ constitutes a sequence of deterministic, positive definite $p\times p$ matrices with uniformly bounded spectra.
\end{itemize}
 Then
\beao
a_{np}^{-2}\,\max_{i=1,\ldots,p}\big|\la_{i}(\A^{1/2}\bfS \A^{1/2})-\la_{i}(\diag(\bfS)\A)\big|\stp 0\,.
\eeao
\end{theorem}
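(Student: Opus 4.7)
The plan is to reduce the comparison of eigenvalues to an already symmetric setting and then apply Weyl's inequality combined with the approximation provided by Theorem~\ref{thm:diag}.

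First I would observe that, while $\diag(\bfS)\bfA$ is generally not symmetric, it is similar to a symmetric matrix. Indeed, since $\bfA$ is positive definite, $\bfA^{1/2}$ exists and is invertible, and $\bfA^{-1/2}(\bfA^{1/2}\diag(\bfS)\bfA^{1/2})\bfA^{1/2} = \diag(\bfS)\bfA$, so $\diag(\bfS)\bfA$ and $\bfA^{1/2}\diag(\bfS)\bfA^{1/2}$ share the same eigenvalues. Hence the task reduces to showing
\beao
a_{np}^{-2}\max_{i=1,\ldots,p}\big|\la_i(\bfA^{1/2}\bfS\bfA^{1/2})-\la_i(\bfA^{1/2}\diag(\bfS)\bfA^{1/2})\big|\stp 0\,.
\eeao
Both matrices appearing here are symmetric, so Weyl's inequality (see \cite{bhatia:1997}) applies directly and yields
\beao
\max_{i=1,\ldots,p}\big|\la_i(\bfA^{1/2}\bfS\bfA^{1/2})-\la_i(\bfA^{1/2}\diag(\bfS)\bfA^{1/2})\big|\le \twonorm{\bfA^{1/2}(\bfS-\diag(\bfS))\bfA^{1/2}}\,.
\eeao

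Next I would use submultiplicativity of the spectral norm to bound
\beao
\twonorm{\bfA^{1/2}(\bfS-\diag(\bfS))\bfA^{1/2}}\le \twonorm{\bfA^{1/2}}^2\,\twonorm{\bfS-\diag(\bfS)}=\twonorm{\bfA}\,\twonorm{\bfS-\diag(\bfS)}\,.
\eeao
By assumption on $\bfA=\bfA_n$, the sequence $(\twonorm{\bfA_n})$ is uniformly bounded, so this last quantity is deterministic times $\twonorm{\bfS-\diag(\bfS)}$.

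Finally, Theorem~\ref{thm:diag} applies (its hypotheses are explicitly part of the assumptions of Theorem~\ref{thm:mainsigma}) and gives $a_{np}^{-2}\twonorm{\bfS-\diag(\bfS)}\stp 0$, which combined with the previous bounds yields the claim. There is essentially no serious obstacle here: the only non-routine step is spotting the similarity transformation that allows one to sidestep the non-symmetry of $\diag(\bfS)\bfA$ and thereby invoke Weyl's inequality; everything else is a standard operator-norm estimate plus the already-established approximation of $\bfS$ by $\diag(\bfS)$.
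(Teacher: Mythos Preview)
Your proof is correct and is essentially the same as the paper's: both reduce the eigenvalue comparison to $\twonorm{\bfA}\,\twonorm{\bfS-\diag(\bfS)}$ via Weyl's inequality and then invoke Theorem~\ref{thm:diag}. You are in fact a bit more careful than the paper, which bounds directly by $\twonorm{\bfS\bfA-\diag(\bfS)\bfA}$ without spelling out the similarity to symmetric matrices that justifies applying Weyl; your explicit passage through $\bfA^{1/2}\diag(\bfS)\bfA^{1/2}$ makes that step transparent.
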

\begin{proof}
By Weyl's inequality (see \cite{bhatia:1997}), Theorem~\ref{thm:diag} and the uniform boundedness of $(\twonorm{\A})$, we have
\begin{equation*}
\begin{split}
a_{np}^{-2}\,\max_{i=1,\ldots,p}&\big|\la_{i}(\A^{1/2}\bfS \A^{1/2})-\la_{i}(\diag(\bfS)\A)\big| \le a_{np}^{-2} \twonorm{\bfS \A - \diag(\bfS)\A}\\
&\le a_{np}^{-2} \twonorm{\bfS - \diag(\bfS)} \twonorm{\A} \cip 0\,,\qquad\nto\,.
\end{split}
\end{equation*}
\end{proof}

In applications involving high-dimensional data sets, it is common to only allow for dependence between certain key variables, which corresponds to many entries of $\bfA$ being zero. Therefore, we introduce a sparseness condition on $\A$ under which we can derive asymptotic spectral properties of $\diag(\bfS)\A$. 

We say that $\bfA=(A_{ij})\in \R^{p\times p}$ is a {\em band matrix} with {\em bandwidth} $m$ if $A_{ij}=0$ whenever $|i-j|>m$.
If $A_{1\bullet},\ldots,A_{p\bullet}\in \R^{1\times p}$ denote the rows of $\A$, we have 
\begin{equation*}
\diag(\bfS)\A= (S_1 A_{1\bullet}', \ldots, S_p A_{p\bullet}')'\,.
\end{equation*}
For $1\le k\le p$, there are $\binom{p}{k}$ ways to choose $k$ of the $p$ rows of $\bfA$. Each choice is uniquely described by an element of the set
\begin{equation*}
\Pi_{k,p}=\{\bfa=(a_1,\ldots,a_k)\in \{1,\ldots,p\}^k: a_1<\cdots<a_k\}\,,
\end{equation*}
where the coordinates of $\bfa$ contain the indices of the selected $\A_{i\bullet}$.
 For $\bfa \in \Pi_{k,p}$ define
\begin{eqnarray*}
J_{k,p}(\bfa,\bfA)= \left\{\begin{array}{ll}
1\,,& \text{if } \sum_{i=1}^k \sum_{j=1; |j- a_i|>k}^p |A_{a_i,j}|>0\,,\\
0\,,&\mbox{otherwise}\,.
\end{array}\right.
\end{eqnarray*}
\begin{remark}{\em
In other words, $J_{k,p}(\bfa,\bfA)$ is 0 if, after inspection of the rows $A_{a_1\bullet},\ldots, A_{a_k\bullet}$ and no further information about $\A$, it is still possible that $\A$ is a band matrix with bandwidth $k$. In fact, $\A$ is a band matrix with bandwidth $k$ if and only if $J_{k,p}(\bfa,\bfA)=0$ for all
$\bfa\in \Pi_{k,p}$.
Also note that $A_{ii}>0$ for all $i$ since $\A$ is symmetric and positive definite. 
}\end{remark} 
For $\wt \bfa  \in \Pi_{k,p}$ chosen uniformly at random, the probability $\P(J_{k,p}(\wt \bfa,\bfA)=1)$ is given by
\begin{equation*}
P_n(\bfA,k):= \binom{p}{k}^{-1} \sum_{\bfa\in \Pi_{k,p}} J_{k,p}(\bfa,\bfA)\,.
\end{equation*}

The following condition holds if the matrices $(\A_n)$ are ``nearly banded''.

{\bf Condition} \eqref{SB}: 
For the sequence of matrices $(\bfA_n)$ 
\begin{equation}\label{SB}
\text{there exists a sequence } k=k_p\to\infty\,,k_p^3=o(p)  \text{ such that } \lim_{\nto} P_n(\bfA,k) =0\,.\tag{$NB$}
\end{equation}
By construction, a sequence of band matrices $(\A)$ with bandwidths $(k)$ such that $k_p^3=o(p)$ satisfies condition \eqref{SB} since $P_n(\bfA,k) =0$ for all $n$. Roughly speaking, $P_n(\bfA,k)$ is small if only a small number of rows relative to the dimension $p$ violate the band matrix structure. In particular, a change of finitely many rows does not influence the validity of condition \eqref{SB}. 

Under condition \eqref{SB} we can simplify $\la_{i}(\diag(\bfS)\A)$ which appeared as approximation of the eigenvalues of $\bfY \bfY'$ in Theorem~\ref{thm:mainsigma}.
We have the following result.
\begin{theorem}[Eigenvalues of $\Y\Y'$]\label{thm:mainsigma2}
Consider the setting and the conditions of Theorem~\ref{thm:mainsigma}. In addition, we assume the following:
\begin{itemize}
 \item $(\A_n)$ satisfies condition \eqref{SB}.
\item The rows of
$(\sigma_{it})_{i,t\ge 1}$ are iid, strictly stationary ergodic sequences. Moreover, they are strongly mixing with rate \fct\ 
$\alpha_j\le c j^{-a}$ for some 
constants $c>0$ and $a>1$. 
\end{itemize}
\begin{enumerate}
\item If $\alpha\in (0,2)$, then
\begin{equation}\label{eq:ev}
a_{np}^{-2}\,\max_{i=1,\ldots,p}\big|\la_{i}(\A^{1/2}\bfS \A^{1/2})-\la_i(\diag(\bfS)\diag(\A))\big|\stp 0\,.
\end{equation}

\item If $\alpha\in (2,4)$, then
\begin{equation*}
a_{np}^{-2}\,\max_{i=1,\ldots,p}\big|\la_{i}(\A^{1/2}(\bfS-c_n \bfI) \A^{1/2})-\la_i(\diag(\bfS-c_n \bfI)\diag(\A))\big|\stp 0\,,
\end{equation*}
with centering $c_n$ defined in \eqref{eq:cn}.
\end{enumerate}
\end{theorem}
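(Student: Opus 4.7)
The plan is to build on Theorem~\ref{thm:mainsigma}: it already reduces $\lambda_i(\bfA^{1/2}\bfS\bfA^{1/2})$ to $\lambda_i(\diag(\bfS)\bfA)$ up to $o(a_{np}^2)$, and exactly the same Weyl\,+\,Theorem~\ref{thm:diag} argument, applied to $\bfA^{1/2}(\bfS-c_n\bfI)\bfA^{1/2}$ whose difference from $\bfA^{1/2}(\diag(\bfS)-c_n\bfI)\bfA^{1/2}$ has spectral norm $\le\|\bfA\|_2\,\|\bfS-\diag(\bfS)\|_2=o(a_{np}^2)$, handles the centering in case~(2). Setting $\bfD_c=\diag(\bfS)-c_n\bfI$ (with $c_n=0$ in case~(1)) and using that $\bfA^{1/2}\bfD_c\bfA^{1/2}$ and $\bfD_c\bfA$ are similar, both cases reduce to showing
\[
a_{np}^{-2}\max_{i=1,\ldots,p}\big|\lambda_i(\bfA^{1/2}\bfD_c\bfA^{1/2})-\lambda_i(\bfD_c\diag(\bfA))\big|\stp 0.
\]

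Next I would truncate at a level matched to the heavy-tailed mechanism. Pick $M_n\to\infty$ slowly, set $T_n=a_{np}^2/M_n$, and let $L=L_n=\{j:|S_j-c_n|>T_n\}$. Split $\bfD_c=\bfD_c^L+\bfD_c^S$ by restricting the diagonal entries to $L$ and to its complement. For the ``small'' part, $\max_{j\notin L}|S_j-c_n|\le T_n$ immediately yields $\|\bfA^{1/2}\bfD_c^S\bfA^{1/2}\|_2\le\|\bfA\|_2\,T_n\le CT_n$, so Weyl's inequality gives $\max_i|\lambda_i(\bfA^{1/2}\bfD_c\bfA^{1/2})-\lambda_i(\bfA^{1/2}\bfD_c^L\bfA^{1/2})|\le CT_n$. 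The ``large'' part $\bfA^{1/2}\bfD_c^L\bfA^{1/2}$ has rank at most $|L|$ and its nonzero spectrum equals that of the $|L|\times|L|$ principal-submatrix product $\bfD_c^{LL}\bfA^{LL}$.

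The structural crux is to show that, with probability tending to one, $\bfA^{LL}=\diag(\bfA)^{LL}$, so that these nonzero eigenvalues are exactly $\{(S_j-c_n)A_{jj}:j\in L\}$. This rests on (i) the indices in $L$ being pairwise at distance more than $k_p$, and (ii) no $j\in L$ being a ``bad row'' violating the near-band structure of $\bfA$. Under the iid-row assumption~(4), the $(S_j)$ are iid, so the positions of the top-order statistics of $|S_j-c_n|$ are exchangeable on $\{1,\ldots,p\}$; the large-deviation tail estimate underlying Theorem~\ref{thm:pp} gives $|L|=O_P(M_n^{\alpha/2})$, whence a union bound yields $\P(\exists\,i\ne j\in L,\,|i-j|\le k_p)\le |L|^2k_p/p=o(1)$ provided $M_n$ grows slowly enough---feasible because $k_p^3=o(p)$. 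Condition~\eqref{SB} bounds the number of bad rows by $o(p/k_p)$, and a second union bound disposes of~(ii).

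On this good event, the nonzero eigenvalues of $\bfA^{1/2}\bfD_c^L\bfA^{1/2}$ coincide with the eigenvalues of the diagonal matrix $\bfD_c\diag(\bfA)$ whose absolute value exceeds $\|\bfA\|_2 T_n$, while the remaining sorted eigenvalues of both matrices lie in $[-\|\bfA\|_2T_n,\|\bfA\|_2T_n]$; thus $\max_i|\lambda_i(\bfA^{1/2}\bfD_c^L\bfA^{1/2})-\lambda_i(\bfD_c\diag(\bfA))|\le 2CT_n$. Combining with the earlier small/large Weyl bound gives a total error of order $T_n=a_{np}^2/M_n$, so dividing by $a_{np}^2$ and letting $M_n\to\infty$ concludes. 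The main obstacle is the probabilistic ``good event'' step: tuning $M_n$ against $k_p$ and the bad-row density $P_n(\bfA,k_p)$ so that the separation and bad-row-avoidance bounds are simultaneously $o(1)$; everything else is Weyl bookkeeping and the elementary linear algebra of the large/small decomposition.
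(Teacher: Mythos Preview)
Your approach is correct and follows the same essential strategy as the paper: reduce via Theorem~\ref{thm:mainsigma} (and Weyl/Theorem~\ref{thm:diag} for the centered case) to comparing $\diag(\bfS)\bfA$ with $\diag(\bfS)\diag(\bfA)$, then exploit the iid-ness of the rows so that the locations of the dominant diagonal entries are uniformly distributed, combine with condition~\eqref{SB} and a separation argument to force the relevant principal submatrix of $\bfA$ to be diagonal. The one genuine difference is the truncation mechanism. You threshold the diagonal entries at $T_n=a_{np}^2/M_n$, obtaining a \emph{random} index set $L$ with $|L|=O_P(M_n^{\alpha/2})$; the paper instead keeps exactly the top $k=k_p$ order statistics of $(S_j)$, a \emph{deterministic} count. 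The paper's choice is slicker: since the ordered indices $\pi$ are then uniform on $\Pi_{k,p}$, condition~\eqref{SB} applies directly via $\P(J_{k,p}(\pi,\bfA)=1)=P_n(\bfA,k)\to0$, with no need to translate~\eqref{SB} into a bound on the number of bad rows, and no large-deviation control of $|L|$ is required. The tuning of $M_n$ against $k_p$ that you flag as the main obstacle simply disappears in the paper's version, where the separation bound $\P(C_n^c)\le 2k_p^3/(p-1)\to0$ comes straight from $k_p^3=o(p)$.

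One small inaccuracy to fix: on your good event the nonzero eigenvalues of $\bfA^{1/2}\bfD_c^L\bfA^{1/2}$ are exactly $\{(S_j-c_n)A_{jj}:j\in L\}$, which need not coincide with the entries of $\bfD_c\diag(\bfA)$ exceeding $\|\bfA\|_2T_n$ in absolute value (some $A_{jj}$ may be small). The conclusion still holds, though: these are precisely the eigenvalues of $\bfD_c^L\diag(\bfA)$, and one more Weyl step with $\|\bfD_c^S\diag(\bfA)\|_2\le\|\bfA\|_2T_n$ gives the sorted-eigenvalue comparison.
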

While $\bfY\bfY'=\A^{1/2}\bfS \A^{1/2}$ is a product of large matrices with complicated eigenstructure, the eigenvalues of $\diag(\bfS)\diag(\A)$ are very easy to find. 
\begin{remark}{\em
In the case $\alpha \in (2,4)$ we note that $\A^{1/2}(\bfS-c_n \bfI) \A^{1/2} = \bfY\bfY' -\E[\bfY\bfY']$. If $\A=\diag(\A)$ then the centering is not needed and \eqref{eq:ev} also holds for $\alpha \in (2,4)$; compare with \cite[Theorem 3.11]{heiny:mikosch:2017:iid}.
}\end{remark}
\begin{proof}
We start with the case $\alpha\in(0,2)$. Let $(k)$ be the integer sequence from condition \eqref{SB}. Since $k\to \infty$ we have $a_{np}^{-2} \la_{k}(\diag(\bfS)) \cip 0$ which implies  
\begin{equation*}
a_{np}^{-2}\la_{k+1}(\diag(\bfS)\A) \le a_{np}^{-2}\la_{k+1}(\diag(\bfS)) \twonorm{\A} \cip 0\,.
\end{equation*}
Therefore it is sufficient to prove
\begin{equation}\label{eq:sldfk1}
a_{np}^{-2} \max_{i=1,\ldots,k} |\la_{i}({\diag_k(\bfS)}\A)-\la_i(\diag_k(\bfS)\diag(\A))| \cip 0\,,
\end{equation}
where ${\diag_k(\bfS)}$ is created from ${\diag(\bfS)}$ by only keeping its $k$ largest entries and setting the others to 0.

Define the random indices $L_1,\ldots, L_p$ via
\begin{equation}\label{eq:L}
S_{L_1}=\la_1(\diag(\bfS)) > \cdots >S_{L_p}=\la_p(\diag(\bfS)) \quad \as\,
\end{equation}
In other words, $S_{L_i}$ is the $i$th order statistic of $S_1,\ldots,S_p$.
We have
\begin{equation*}
{\diag_k(\bfS)}\A= (0_p, \ldots, 0_p, S_{\pi_1} A_{{\pi_1}\bullet}',0_p, \ldots, 0_p, S_{\pi_2} A_{{\pi_2}\bullet}', \ldots, S_{\pi_k} A_{{\pi_k}\bullet}',0_p, \ldots, 0_p)'\,,
\end{equation*}
where $\pi_1 < \ldots < \pi_k$ are the order statistics of $L_1,\ldots,L_k$ and $0_p$ is the $p$-dimensional zero vector. Since the $S_i$'s are iid, $L_1,\ldots,L_k$ have a uniform distribution
on the set of
distinct $k$-tuples from $(1,\ldots,p)$. Therefore the $k$-tuple $\pi=(\pi_1,\ldots,\pi_k)$ is uniformly distributed on $\Pi_{k,p}$. 

Define the set 
$B_n=\{ J_{k,p}(\pi,\bfA)=0\}$. 
From condition \eqref{SB} and the fact that $\pi$ is uniformly distributed on $\Pi_{k,p}$, we see that $\P(B_n)\to 1$.
On $B_n$, we have for $1\le i\le k$,
\begin{equation*}
S_{\pi_i} A_{{\pi_i}\bullet}= (0,\ldots,0, S_{\pi_i} A_{{\pi_i},\pi_i-k},
S_{\pi_i} A_{{\pi_i},\pi_i-k+1}, \ldots, S_{\pi_i} A_{{\pi_i},\pi_i+k},0,\ldots,0)\,.
\end{equation*}

 Consider the set
\begin{equation}\label{eq:setA}
C_n=\{|L_i-L_j|>2k\,, i,j=1,\ldots,k\,, i\ne j\}\,.
\end{equation}
Since $L_1,\ldots,L_k$ are uniformly distributed
on the set of
distinct $k$-tuples from $(1,\ldots,p)$ we have
\begin{equation*}
\lim_{\nto} \P(C_n^c)\le \lim_{\nto} k(k-1) \dfrac{2pk (p-2)\ldots (p-k+1)}{p(p-1)\ldots (p-k+1)}\le \lim_{\nto}
\dfrac{2k^3}{p-1}= 0 \,,
\end{equation*}
where condition \eqref{SB} was used for the last equality.

On $B_n\cap C_n$, the matrix ${\diag_k(\bfS)}\A$ is block diagonal with $(2k+1)\times (2k+1)$ blocks $\bfQ_i$, $i\le k$. The matrix $\bfQ_i$ is zero everywhere except for its $(k+1)$st row which is  
\begin{equation*}
(S_{\pi_i} A_{{\pi_i},\pi_i-k},
S_{\pi_i} A_{{\pi_i},\pi_i-k+1}, \ldots, S_{\pi_i} A_{{\pi_i},\pi_i+k})\,,\quad i\le k\,.
\end{equation*}
The $(k+1, k+1)$ entry of $\bfQ_i$ is at position $(\pi_i,\pi_i)$ of ${\diag_k(\bfS)}\A$.
Therefore the only non-zero eigenvalue of $\bfQ_i$ is $S_{\pi_i} A_{{\pi_i},\pi_i}$. 
We conclude that on $B_n\cap C_n$
\begin{equation}\la_{i}({\diag_k(\bfS)}\A)=\la_i(\diag_k(\bfS)\diag(\A))\,, \quad 1\le i\le k\,.\end{equation}
This finishes the proof of \eqref{eq:sldfk1}.

In the case $\alpha\in (2,4)$, we replace $\bfS, S_i$ by $\bfS-c_n\bfI, S_i-c_n$, respectively, and use the same proof as for $\alpha\in (0,2)$.
\end{proof}

Define $\wt L_i, i=1,\ldots,p$ via
\begin{eqnarray*}
(S_{\wt L_i}-c_n) A_{\wt L_i,\wt L_i}&=&\la_i(\diag(\bfS-c_n \bfI)\diag(\A))\,.
\end{eqnarray*}
The random variable $\wt L_i$ encodes the location of the $i$th largest value of the entries of $\diag(\bfS-c_n \bfI)\diag(\bfA)$.

\begin{remark}{\em
As a by-product of the proof of Theorem~\ref{thm:mainsigma2} we get that, with probability tending to 1,
$\{\wt L_1, \ldots,\wt L_k \}= \{L_1,\ldots, L_k\}$ 
for any fixed $k\ge 1$.
}\end{remark}

Next we approximate the eigenvectors of $\Y\Y'$.
To this end, let $\bfe_j=(0,\ldots,0,1,0,\ldots,0)'$, $j=1,\dots,p$, denote the canonical basis vectors of $\R^p$. We define $\sign(\A^{1/2} \bfe_{\wt L_j})$ as the sign of the first non-zero coordinate of the vector $\A^{1/2} \bfe_{\wt L_j}$.

From the point process convergence in Theorem \ref{thm:pp} one can deduce that the largest eigenvalues of $\bfS$ are separated. Indeed they converge in distribution to the $(\Gamma_i^{-2/\alpha})$ in the representation of the limiting point process $N$; see \eqref{eq:N1} and \eqref{eq:N2}. Combining this with Theorem \ref{thm:mainsigma2}, the aforementioned separation property is inherited by the eigenvalues of $\Y\Y'$ which simplifies the identification of associated eigenvectors. It turns out that the unit eigenvectors of $\Y\Y'$ are approximated by the properly normalized $(\A^{1/2} \bfe_{j})$ as shown in the next theorem. 

\begin{theorem}[Eigenvectors of $\Y\Y'$]\label{thm:eigenvector}
Consider the setting and the conditions of Theorem~\ref{thm:mainsigma2}. In addition, we assume $\sigma^2\le M$ a.s.
for some constant $M>0$. 
\begin{enumerate}
\item If $\alpha\in (0,2)$, then
\begin{equation}\label{eq:vec}
\ltwonorm{\bfv_j(\A^{1/2}\bfS \A^{1/2}) - c_{\bfA,j} \A^{1/2} \bfe_{\wt L_j}} \cip 0\,,\qquad \nto\,,\,j\ge 1\,,
\end{equation}
with the normalization and orientation constants
\begin{equation*}
c_{\bfA,j}=\big\|\A^{1/2} \bfe_{\wt L_j}\big\|_{\ell_2}^{-1} \,\, \sign\big(\A^{1/2} \bfe_{\wt L_j}\big)\,.
\end{equation*} 
\item If $\alpha\in (2,4)$, then
\begin{equation*}
\ltwonorm{\bfv_j(\A^{1/2}(\bfS-c_n\bfI) \A^{1/2}) - c_{\bfA,j} \A^{1/2} \bfe_{\wt L_j}} \cip 0\,,\qquad \nto\,,\,j\ge 1\,.
\end{equation*}
\end{enumerate}
\end{theorem}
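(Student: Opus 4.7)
The strategy is to construct an explicit approximate eigenvector, control its residual via Theorem~\ref{thm:diag} and the band structure introduced in the proof of Theorem~\ref{thm:mainsigma2}, and then invoke a $\sin\Theta$/Davis--Kahan argument made possible by the asymptotic separation of the top eigenvalues. Throughout, treat first $\alpha \in (0,2)$; the case $\alpha \in (2,4)$ follows by an identical argument after replacing $\bfS$ with $\bfS - c_n\bfI$, the boundedness $\sigma^2 \le M$ ensuring the centering is well defined.

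Set $\bfM = \A^{1/2}\bfS\A^{1/2}$ and $\bfu_j = \A^{1/2}\bfe_{\wt L_j}$, so that the target vector in \eqref{eq:vec} is $c_{\bfA,j}\bfu_j$. A direct computation gives $\bfM\bfu_j = \A^{1/2}\bfS\A\bfe_{\wt L_j}$. Splitting $\bfS = \diag(\bfS) + (\bfS - \diag(\bfS))$, the off-diagonal piece contributes at most $\twonorm{\A}^{3/2}\twonorm{\bfS - \diag(\bfS)} = o(a_{np}^2)$ in probability by Theorem~\ref{thm:diag} and the uniform bound on $\twonorm{\A}$. The $k$-th entry of $\diag(\bfS)\A\bfe_{\wt L_j}$ equals $S_k A_{k,\wt L_j}$; on the event $B_n \cap C_n$ from the proof of Theorem~\ref{thm:mainsigma2}, condition~\eqref{SB} together with $|\wt L_i - \wt L_l| > 2k$ for $i \ne l$ forces $A_{\wt L_i, \wt L_j} = 0$ for $i \le k$, $i \ne j$, leaving $S_{\wt L_j}A_{\wt L_j,\wt L_j}\bfe_{\wt L_j}$ as the only nontrivial top contribution, while the bulk from indices outside $\{\wt L_1,\ldots,\wt L_k\}$ has $\ell_2$ norm at most $\la_{k+1}(\diag(\bfS))\,\twonorm{\A} = o(a_{np}^2)$ by Theorem~\ref{thm:pp} and $k = k_n \to \infty$. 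Altogether,
\begin{equation*}
\bfM\bfu_j = S_{\wt L_j}A_{\wt L_j,\wt L_j}\,\bfu_j + \mathbf{r}_j, \qquad \ltwonorm{\mathbf{r}_j} = o(a_{np}^2) \text{ in probability.}
\end{equation*}
By Theorem~\ref{thm:mainsigma2} and the representation \eqref{eq:N2}, the rescaled top-$(j{+}1)$ eigenvalues of $\bfM$ converge jointly to a vector with almost surely distinct coordinates $(\E[\sigma^\alpha])^{2/\alpha}(\Gamma_1^{-2/\alpha},\ldots,\Gamma_{j+1}^{-2/\alpha})$, so with probability tending to one the spectral gap around $\la_j(\bfM)$ exceeds $\delta\,a_{np}^2$ for some random $\delta = \delta(j) > 0$. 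Since $|\la_j(\bfM) - S_{\wt L_j}A_{\wt L_j,\wt L_j}| = o(a_{np}^2)$ by Theorem~\ref{thm:mainsigma2} and $\ltwonorm{\bfu_j}$ is bounded, we obtain $\ltwonorm{(\bfM - \la_j(\bfM)\bfI)\bfu_j} = o(a_{np}^2)$. Setting $\wh\bfu_j = \bfu_j/\ltwonorm{\bfu_j}$ and expanding in the orthonormal eigenbasis of the symmetric matrix $\bfM$, the standard $\sin\Theta$ inequality yields $\ltwonorm{\bfv_j(\bfM) \mp \wh\bfu_j} \stp 0$, and the sign ambiguity is resolved by the shared orientation convention of a positive first nonzero coordinate, producing exactly $c_{\bfA,j}\bfu_j$.

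The main obstacle lies in the residual bound on $\mathbf{r}_j$: it demands combining, at precisely the scale $a_{np}^2$, the spectral-norm estimate of Theorem~\ref{thm:diag}, the band-matrix cancellation of cross terms at competing top locations provided by \eqref{SB} and the separation event $C_n$, and the Poisson-based decay $a_{np}^{-2}\la_{k_n+1}(\diag(\bfS))\stp 0$ for the bulk sum (which itself exploits $k_n\to\infty$ slowly). Once these three ingredients are aligned, the $\sin\Theta$ step is routine thanks to the almost sure distinctness of the Fr\'echet-like limits of the top eigenvalues, and the extension to $\alpha\in(2,4)$ amounts to running the same chain of estimates for the centered matrix $\bfS - c_n\bfI$ using the $c_n$-centered Poisson limit of Theorem~\ref{thm:pp}.
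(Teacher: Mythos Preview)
Your approach is essentially the paper's: construct an approximate eigenvector, bound its residual by combining Theorem~\ref{thm:diag}, the events $B_n,C_n$ from the proof of Theorem~\ref{thm:mainsigma2}, and the bulk decay $a_{np}^{-2}\la_{k_n+1}(\diag(\bfS))\stp 0$, then separate eigenvalues via the Poisson limit and finish with a Davis--Kahan/$\sin\Theta$ bound. The only organizational difference is that the paper first passes from $\A^{1/2}\bfS\A^{1/2}$ to the similar (non-symmetric) matrix $\bfS\A$, reducing \eqref{eq:vec} to $\ltwonorm{\bfv_j(\bfS\A)-\bfe_{\wt L_j}}\stp 0$; this makes the target vector a canonical basis element with trivial orientation, at the price of invoking a perturbation result valid for non-symmetric matrices (Proposition~A.7 in \cite{heiny:mikosch:2017:iid}) rather than the standard symmetric $\sin\Theta$ theorem you can use on $\bfM$.

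One point in your write-up is imprecise: for general $\A$ the rescaled top eigenvalues $a_{np}^{-2}\la_i(\bfM)$ do \emph{not} converge jointly to $(\E[\sigma^\alpha])^{2/\alpha}(\Gamma_1^{-2/\alpha},\ldots,\Gamma_{j+1}^{-2/\alpha})$; by Theorem~\ref{thm:mainsigma2} they are asymptotic to $a_{np}^{-2}S_{\wt L_i}A_{\wt L_i,\wt L_i}$, and the diagonal factors $A_{\wt L_i,\wt L_i}$ at random locations need not stabilize. The paper avoids identifying a joint limit altogether and instead argues only that, for each fixed $\xi>1$, the normalized gap eventually exceeds $\xi\vep^{(n)}$ (where $\vep^{(n)}$ is your $a_{np}^{-2}\ltwonorm{\mathbf r_j}$); this is enough for the perturbation bound and follows from $\vep^{(n)}\stp 0$ together with Theorem~\ref{thm:pp}. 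Your conclusion is unaffected, but you should phrase the gap argument in this weaker form.
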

\begin{proof}
We focus on the case $\alpha \in (0,2)$. 
Recall that $\A^{1/2}\bfS \A^{1/2}$ and $\bfS \A$ have the same eigenvalues. For any eigenvalue $\la$ of $\bfS \A$ with associated eigenvector $\bfv$, i.e $\bfS \A\bfv =\la \bfv$, we have
\begin{equation*}
\A^{1/2}\bfS \A^{1/2} \big(\A^{1/2} \bfv\big) =\la \big(\A^{1/2} \bfv\big)\,.
\end{equation*}
In words, $\bfv$ is an eigenvector of $\bfS \A$ if and only if $\A^{1/2} \bfv$ is an eigenvector of $\A^{1/2}\bfS \A^{1/2}$; and both eigenvectors are associated with the same eigenvalue. For the proof of \eqref{eq:vec}, it is therefore enough to show
\begin{equation}\label{eq:vec1}
\ltwonorm{\bfv_j(\bfS \A) -  \bfe_{\wt L_j}} \cip 0\,,\qquad \nto\,, \,j\ge 1\,.
\end{equation}
Fix $j\ge 1$ and let $(k)$ be the integer sequence from condition \eqref{SB}. We will follow the lines of the proof of Theorem 3.11 in \cite{heiny:mikosch:2017:iid}. 

By Theorem~\ref{thm:diag} and the observation $a_{np}^{-2} \twonorm{\diag(\bfS)-\diag_k(\bfS)} \cip 0$, we see that
\beam \label{eq:colas}
a_{np}^{-2} \max_{i=1,\ldots,p} \ltwonorm{\bfS \A \bfe_i -\diag_k(\bfS) \bfA \bfe_i}
 \le a_{np}^{-2} \twonorm{\bfS \A - \diag_k(\bfS) \bfA} \cip 0\,,\quad\nto\,,
\eeam
and consequently
\begin{equation}\label{eq:mhjf}
\vep^{(n)}:=a_{np}^{-2} \ltwonorm{\bfS \A \, \bfe_{\wt L_j}-S_{\wt L_j} A_{\wt L_j,\wt L_j}\, \bfe_{\wt L_j}} \cip 0\,.
\end{equation}

Before we can apply Proposition~A.7 in \cite{heiny:mikosch:2017:iid} we need to show that, 
with probability converging to $1$, there are no other eigenvalues in a suitably small interval around $\lambda_j(\bfS \A)$.

Let $\xi >1$. We define the set
\begin{equation*}
\Omega_n = \Omega_n(j,\xi)= \{a_{np}^{-2} |\la_j(\bfS \A)-\la_i(\bfS \A)|>\xi\, \vep^{(n)}\, :\, i\neq j =1,\ldots,p  \}\,.
\end{equation*} 
Using \eqref{eq:mhjf} and Theorem~\ref{thm:pp}, we obtain
\begin{equation*}
\begin{split}
\lim_{\nto} \P\big(\Omega_n^c) &= \lim_{\nto} \P( a_{np}^{-2} \min\{ \la_{j-1}(\bfS \A)-\la_{j}(\bfS \A), \la_j(\bfS \A)-\la_{j+1}(\bfS \A)\} \le \xi\, \vep^{(n)}\big) =0\,.
\end{split}
\end{equation*}

From the proof of Theorem~\ref{thm:mainsigma2} recall the definitions of the sets $B_n$ and $C_n$. 
By Proposition~A.7 in \cite{heiny:mikosch:2017:iid}, the unit eigenvector $\bfv_{j}(\bfS\A)$ and the projection $\Proj_{\bfe_{\wt L_j}}(\bfv_{j}(\bfS\A))$
of  the vector $\bfv_{j}(\bfS\A)$ onto the linear space generated by $\bfe_{\wt L_j}$
satisfy for fixed $\delta>0$:
\begin{equation*}
\begin{split}
\limsup_{\nto}~ &\P\big(\ltwonorm{\bfv_{j}(\bfS\A)-\Proj_{\bfe_{\wt L_j}}(\bfv_{j}(\bfS\A))}>\delta\big)\\
 &\le \limsup_{\nto} \P(\{\ltwonorm{\bfv_{j}(\bfS\A)-\Proj_{\bfe_{\wt L_j}}(\bfv_{j}(\bfS\A))}>\delta\} \cap \Omega_n \cap B_n \cap C_n)\\
&\quad + \limsup_{\nto}\P((\Omega_n \cap B_n \cap C_n)^c)\\
&\le \limsup_{\nto} \P(\{2\vep^{(n)}/(\xi\,\vep^{(n)}-\vep^{(n)})>\delta\} \cap \Omega_n \cap B_n \cap C_n)\\
&\le \limsup_{\nto} \P(\{2/(\xi-1)>\delta\})=  \1_{\{ 2/(\xi-1)>\delta\}}.
\end{split}
\end{equation*}
The right-hand side is zero for sufficiently large $\xi$. Since both $\bfv_{j}(\bfS\A)$ and $\bfe_{\wt L_j}$ are unit vectors 
and $\ltwonorm{\Proj_{\bfe_{\wt L_j}}(\bfv_{j}(\bfS\A))}\le 1$, this means that
$\ltwonorm{\bfv_j(\bfS \A) -  \bfe_{\wt L_j}} \cip 0\,.$ This finishes the proof of \eqref{eq:vec1}.

For $\alpha\in (2,4)$, the proof is identical after replacing $\bfS, S_i$ by $\bfS-c_n\bfI, S_i-c_n$, respectively.
\end{proof}


\section{A \sv\ model with thinning}\label{sec:thin}\setcounter{equation}{0}
In this section we consider a modification of the \sv\ model $X_{it}=\sigma_{it}Z_{it}$
introduced in \eqref{eq:svmodel}. We keep the iid structure of the random 
field $(Z_{it})$, the \regvar\ condition \eqref{eq:regvar} on $Z$ and the independence of $(\sigma_{it})$ and $(Z_{it})$ but we 
allow that $\sigma_{it}$ varies with $n$:
\begin{equation}\label{eq:xmult}
X_{it}^{(n)}= \sigma_{it}^{(n)}\, Z_{it}\,,\quad n=1,2,\ldots\,.
\end{equation}
Here $(\sigma_{it}^{(n)})_{i,t\in \N}$ is a field of iid non-negative \rv s  with  a generic element
$\sigma^{(n)}$ whose \ds\ may change with $n$. To be precise, we assume the following condition:

\begin{itemize}
\item[]{\bf Assumption \eqref{eq:sigman}.}  For given $0=s_0<s_1<\cdots < s_m<\infty$ and $m\ge 1$,
\begin{equation}\label{eq:sigman} 
\P(\sigma^{(n)}=s_i)= q_i^{(n)}\,, \quad i=0,\ldots,m, \, n=1,2,\ldots\,, \tag{$A{\sigma}$}
\end{equation}
$\lim_{\nto} q_0^{(n)}=1$ and the limits $\lim_{\nto} n q_i^{(n)}>0$, $i=1,\ldots,m$, exist.
\end{itemize}
\par
\begin{remark}{\em
The restriction to positive $s_i$, $i=1,\ldots,m$, is for notational convenience only.
Also the assumption $\lim_{\nto} q_0^{(n)}=1$ which implies $\sigma^{(n)}\cip 0$ is for simplicity of presentation only. It implies that the matrix $\X$ is sparse. If $\E[(\sn)^{\alpha}]$ had a positive limit $w$, the asymptotic spectral behavior of $\bfS=\X\X'$ constructed from  $\X=(\sigma_{it}^{(n)} Z_{it})$ and $\X=(w^{1/\alpha} Z_{it})$, respectively, would be the same and one could work with the normalizing sequence $a_{np}^2$. However, if $\E[(\sn)^{\alpha}]\to 0$, one needs to take this decay into account and adjust the normalizing sequence to obtain non-trivial asymptotic results.
}\end{remark}
 
We will assume the condition \eqref{eq:p} for some $\beta\in (0,1]$
and use a normalizing \seq\ $(b_n)$ \st
\beao
n\,p\,\E[ (\sigma^{(n)})^\alpha]\,\P(|Z|>b_n)\to 1\,,\qquad \nto\,.
\eeao
Since $q_0^{(n)}\to 1$ we have $\E[(\sigma^{(n)})^{\alpha}]\to 0$.
The additional condition $\lim_{\nto} n\,q_i^{(n)}>0$ means that the expected number of non-zero $\sigma$'s in a row of $\X$ is positive. 
It ensures that $\lim_{\nto}np\,\E[(\sigma^{(n)})^{\alpha}]=\infty$, hence $b_{n}\to\infty$. An alternative way of defining  $(b_n)$ would be
\beam\label{eq:sdgdsd}
b_n = a_{[np\,\E[(\sigma^{(n)})^\alpha]]}\,.
\eeam 
\bre\label{rem:v}
We observe that for any $v>0$,
\beao
\min_{i =1,\ldots,m}s_i^v (1-q_0^{(n)}) \le \E[(\sigma^{(n)})^v]\le \max_{i=1,\ldots,m} s_i^v (1-q_0^{(n)})\,,
\eeao 
hence all moments  $\E[(\sigma^{(n)})^v]$ are of the same order as $1-q_0^{(n)}$. 
\ere
For fixed $n$, relations \eqref{eq:breim} and \eqref{eq:tail2} remain valid but we will need results for these tails when
$x=x_n\to\infty$ as $\nto$. By the uniform \con\ theorem for \regvary\ \fct s we have
(see \eqref{eq:regvar}, \eqref{eq:breim} and \eqref{eq:z1z2} for the definitions of $q_\pm$ and $\wt q_\pm$) 
\beam
\dfrac{\P(\pm \sigma^{(n)}\,Z>x_n)}{\P(|Z|>x_n)}
&\sim&
q_{\pm}\,\E [(\sigma^{(n)})^\alpha]\,,\label{eq:w11}\\
\dfrac{\P(\pm \sigma_1^{(n)}\sigma_2^{(n)}\,Z_1Z_2>x_n)}{\P(|Z_1Z_2|>x_n)}
&\sim& \wt q_{\pm}\,\E [(\sigma_1^{(n)}\sigma_2^{(n)})^{\alpha}]\,.\label{eq:w2}
\eeam
\par
The following result asserts that in the thinned stochastic volatility model \eqref{eq:xmult} the sample covariance matrix is approximated by its diagonal under the new normalization $b_n$. It is an analog of Theorem~\ref{thm:diag}. 
\bth\label{thm:diagsigma} Consider the \sv\ model \eqref{eq:xmult}.
We assume the following conditions:
\begin{itemize} 
\item The \regvar\ condition \eqref{eq:regvar} for some  
$\alpha \in (0,2)\cup (2,4)$ and $\E[Z]=0$ if $\E[|Z|]<\infty$. 
\item The growth condition \eqref{eq:p} for $p=p_n\to\infty$ for some  $\beta \in (0,1] $. 
\item Condition \eqref{eq:sigman} on the \ds\ of $\sigma^{(n)}$.
\end{itemize}
Then
\beam\label{eq:b1}
b_{n}^{-2} \twonorm{\bfS - \diag(\bfS)} \stp 0\,,\qquad\nto\,.
\eeam
\ethe
Theorem~\ref{thm:diag} and Theorem \ref{thm:diagsigma} show that neither the dependence structure in the $\sigma$-field nor a time-dependent distribution of $\sigma$ change the core structure of $\bfS$, which is solely determined by the dependence in the heavy-tailed $Z$-field. Linear dependence among the $Z_{it}$'s, for instance, was studied in \cite{davis:heiny:mikosch:xie:2016}. The resulting approximation of $\bfS$ in this case is block diagonal.
 
The proof of Theorem \ref{thm:diagsigma} is given in Section~\ref{sec:proofthm31}. 
\par
By an application of Weyl's inequality, we may conclude from \eqref{eq:b1} that
\beam\label{eq:b3}
b_{n}^{-2} \max_{i=1,\ldots,p} |\la_{i}(\bfS)-\la_i(\diag(\bfS)|\le b_{n}^{-2}\|\bfS-\diag(\bfS)\|_2\stp 0\,,\qquad \nto\,.
\eeam
\par
Using \eqref{eq:b3} and a continuous mapping argument, we  can derive the limit of the point processes of the
eigenvalues of the sample covariance matrix $\bfS$. 
\bth\label{thm:ppsig}
Assume the conditions of Theorem~\ref{thm:diagsigma} and, 
in addition to \eqref{eq:sigman}, for those $j\in \{1,\ldots,m\}$ for which
 $\lim_{\nto} n\,q_j^{(n)}=\infty$,   
\beam\label{eq:check}
p\,\ex^{-c\,n\,q_j^{(n)}}\to 0\,,\qquad \nto\,,\qquad \mbox{for each $c>0$}.
\eeam
Then we have the following weak \con\ of the \pp es
with state space $\bbr\backslash \{0\}$:
\beao
N_n= \sum_{i=1}^{p} \vep_{b_{n}^{-2}(\la_i(\bfS)-c_{n})}\std N\,,\qquad \nto\,.
\eeao
\item
Here  $N$ is a Poisson process on $\bbr\backslash \{0\}$ with mean \ms\ $\mu_\alpha(x,\infty)=x^{-\alpha/2}$
and $\mu_\alpha(-\infty,-x)=0$ for  $x>0$, and 
\beao
c_n=\left\{ \barr{ll}
0\,,& \mbox{if $\alpha\in (0,2)$,}\\
n\,\E [(X^{(n)})^2]\,,&\mbox{if $\alpha\in (2,4)$}\,. 
\earr\right.
\eeao
\ethe
The proof is given in Section~\ref{sec:proofthm3.2}. This theorem generalizes the results in 
Auffinger and Tang \cite{auffinger:tang:2016} who considered the case $p/n\to \gamma\in (0,\infty)$, $m=1$ and $1-q_0^{(n)}=n^{-v}$ for
some $ v\in [0,1]$. Condition \eqref{eq:check}
ensures that $n\,q_j^{(n)}\to \infty$ sufficiently fast. For example, if $p=n^\beta$ for some $\beta\in (0,1]$ and $q_n^{(j)}\ge n^{-v}$ for some $v\in (0,1)$ then 
for any fixed $c>0$,
\beao
p\, \ex^{-c\, nq_j^{(n)}}\le n^\beta \ex^{-c\,n^{1-v}}\to 0\,.
\eeao  
\par
Theorem~\ref{thm:ppsig} shows that the limiting point processes of the thinned stochastic volatility model and the original one (see Theorem~\ref{thm:pp}) are the same. Typically, thinning decreases the magnitude of the eigenvalues $\la_i(\bfS-c_n)$ which is accounted for by a smaller normalization $b_n$ compared with $a_{np}^2$ used in Theorem~\ref{thm:pp}. Indeed, from \eqref{eq:sdgdsd} one sees that $b_n a_{np}^{-2} \to 0$.
\par

Next, we study the matrix $ \Y=\bfA^{1/2} \X$ and the corresponding sample covariance matrix $\Y\Y'$ under thinning. 

\begin{theorem}\label{thm:mainsigman}
We consider the matrix $\Y=\A^{1/2} \X$, where $\X$ follows the model \eqref{eq:xmult}. We assume the following conditions:
\begin{itemize} 
\item The \regvar\ condition \eqref{eq:regvar} for some  
$\alpha \in (0,2)\cup (2,4)$ and $\E[Z]=0$ if $\E[|Z|]<\infty$. 
\item The growth condition \ref{eq:p} for $p=p_n\to\infty$ for some  $\beta\in (0,1]$. 
\item Condition \eqref{eq:sigman} on the \ds\ of $\sigma^{(n)}$.
\item $\bfA=\A_n$ constitutes a sequence of deterministic, positive definite $p\times p$ matrices with uniformly bounded spectra.
\end{itemize}
 Then
\beao
b_n^{-2}\,\max_{i=1,\ldots,p}\big|\la_{i}(\A^{1/2}\bfS \A^{1/2})-\la_{i}(\diag(\bfS)\A)\big|\stp 0\,.
\eeao
\end{theorem}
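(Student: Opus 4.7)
The plan is to mimic the argument used for Theorem~\ref{thm:mainsigma} essentially verbatim, replacing the application of Theorem~\ref{thm:diag} with Theorem~\ref{thm:diagsigma} and the normalization $a_{np}^2$ with $b_n^2$. All of the genuine analytic work has already been done in Theorem~\ref{thm:diagsigma}, so what remains is a deterministic spectral comparison combined with the uniform boundedness of $(\twonorm{\A_n})$.

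First, I would reduce the eigenvalue comparison to a norm estimate. Since $\A$ is positive definite, $\A^{1/2}\bfS\A^{1/2}$ is similar to $\bfS\A$ via the invertible matrix $\A^{-1/2}$; likewise $\A^{1/2}\diag(\bfS)\A^{1/2}$ is similar to $\diag(\bfS)\A$. Hence the ordered eigenvalues appearing in the statement coincide with those of the two symmetric positive semidefinite matrices $\A^{1/2}\bfS\A^{1/2}$ and $\A^{1/2}\diag(\bfS)\A^{1/2}$, and Weyl's inequality (see \cite{bhatia:1997}) yields
\[
\max_{i=1,\ldots,p}\bigl|\la_i(\A^{1/2}\bfS\A^{1/2})-\la_i(\diag(\bfS)\A)\bigr| \;\le\; \twonorm{\A^{1/2}\bigl(\bfS-\diag(\bfS)\bigr)\A^{1/2}}.
\]
Sub-multiplicativity of the spectral norm, together with $\twonorm{\A^{1/2}}^2=\twonorm{\A}$, then gives the upper bound $\twonorm{\A}\,\twonorm{\bfS-\diag(\bfS)}$.

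Finally, I would divide by $b_n^2$ and apply Theorem~\ref{thm:diagsigma}, which states precisely that $b_n^{-2}\twonorm{\bfS-\diag(\bfS)}\stp 0$ as $\nto$. Combined with the hypothesis that $(\twonorm{\A_n})$ is uniformly bounded, the product tends to zero in probability, completing the proof. I do not anticipate a substantial obstacle: the only mildly delicate point is that $\bfS\A$ and $\diag(\bfS)\A$ are not symmetric in general, which is handled by passing to the similar symmetric matrices before invoking Weyl's inequality. Everything else is routine bookkeeping.
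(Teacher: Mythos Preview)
Your proposal is correct and matches the paper's own argument essentially verbatim: the paper states that the proof is identical to that of Theorem~\ref{thm:mainsigma}, using Theorem~\ref{thm:diagsigma} in place of Theorem~\ref{thm:diag}, i.e., Weyl's inequality followed by $\twonorm{\bfS-\diag(\bfS)}\,\twonorm{\A}$ and the uniform bound on $(\twonorm{\A_n})$. Your extra care in passing to the similar symmetric matrices before invoking Weyl is a welcome clarification but does not change the route.
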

The proof of this result is identical to the proof of Theorem~\ref{thm:mainsigma}, using Theorem~\ref{thm:diagsigma} instead of Theorem~\ref{thm:diag}.

Moreover the same arguments that proved Theorems~\ref{thm:mainsigma2} and \ref{thm:eigenvector}, using Theorems \ref{thm:diagsigma} and \ref{thm:mainsigman} instead of Theorems ~\ref{thm:diag} and \ref{thm:mainsigma}, respectively, show the following result.

\begin{theorem}[Eigenvalues and eigenvectors of $\Y\Y'$]\label{thm:sigman}
Consider the setting and the conditions of Theorem~\ref{thm:mainsigman}. In addition, we assume that $(\A_n)$ satisfies condition \eqref{SB}.
\begin{enumerate}
\item If $\alpha\in (0,2)$, we have for the eigenvalues of $\Y\Y'$,
\begin{equation*}
b_n^{-2}\,\max_{i=1,\ldots,p}\big|\la_{i}(\A^{1/2}\bfS \A^{1/2})-\la_i(\diag(\bfS)\diag(\A))\big|\stp 0\,,
\end{equation*}
and for the eigenvectors of $\Y\Y'$,
\begin{equation*}
\ltwonorm{\bfv_j(\A^{1/2}\bfS \A^{1/2}) - c_{\bfA,j} \A^{1/2} \bfe_{\wt L_j}} \cip 0\,,\qquad \nto\,,\,j\ge 1\,,
\end{equation*}
with the normalization and orientation constants
\begin{equation*}
c_{\bfA,j}=\big\|\A^{1/2} \bfe_{\wt L_j}\big\|_{\ell_2}^{-1} \,\, \sign\big(\A^{1/2} \bfe_{\wt L_j}\big)\,.
\end{equation*} 
\item If $\alpha\in (2,4)$, the eigenvalues of $\Y\Y'- \E[\Y\Y']$ satisfy
\begin{equation*}
b_n^{-2}\,\max_{i=1,\ldots,p}\big|\la_{i}(\A^{1/2}(\bfS-c_n \bfI) \A^{1/2})-\la_i(\diag(\bfS-c_n \bfI)\diag(\A))\big|\stp 0\,,
\end{equation*}
and for the eigenvectors of $\Y\Y'- \E[\Y\Y']$ we have
\begin{equation*}
\ltwonorm{\bfv_j(\A^{1/2}(\bfS-c_n\bfI) \A^{1/2}) - c_{\bfA,j} \A^{1/2} \bfe_{\wt L_j}} \cip 0\,,\qquad \nto\,,\,j\ge 1\,.
\end{equation*}
\end{enumerate}
\end{theorem}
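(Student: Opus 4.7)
The plan is to mimic the proofs of Theorem~\ref{thm:mainsigma2} and Theorem~\ref{thm:eigenvector}, with the normalization $a_{np}^{-2}$ replaced throughout by $b_n^{-2}$ and with appeals to Theorem~\ref{thm:diagsigma} and Theorem~\ref{thm:mainsigman} in place of Theorem~\ref{thm:diag} and Theorem~\ref{thm:mainsigma}, respectively. I would treat the eigenvalue assertion and the eigenvector assertion separately, and in each case handle $\alpha\in(0,2)$ first and recover $\alpha\in(2,4)$ by the obvious centering substitution.

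For the eigenvalue statement in part~(1), I would first invoke Theorem~\ref{thm:mainsigman} to reduce matters to controlling $b_n^{-2}\max_i|\lambda_i(\diag(\bfS)\bfA)-\lambda_i(\diag(\bfS)\diag(\bfA))|$. The key observation, exactly as in the proof of Theorem~\ref{thm:mainsigma2}, is that for the integer sequence $(k)=(k_p)$ supplied by \eqref{SB} one has $b_n^{-2}\lambda_{k+1}(\diag(\bfS))\stp 0$; this is a straightforward consequence of the point process convergence in Theorem~\ref{thm:ppsig}, since the $k$-th largest atom of the limiting Poisson process tends to $0$ almost surely as $k\to\infty$. Replacing $\diag(\bfS)$ by $\diag_k(\bfS)$ (only the top $k$ diagonal entries retained) is therefore asymptotically free. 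After this reduction I would repeat the random-permutation argument from the proof of Theorem~\ref{thm:mainsigma2}: the random locations $L_1,\dots,L_k$ of the top $k$ diagonal entries, ordered as $\pi$, are uniformly distributed on $\Pi_{k,p}$, so \eqref{SB} yields $\P(B_n)\to 1$ for $B_n=\{J_{k,p}(\pi,\bfA)=0\}$ while $k^3=o(p)$ gives $\P(C_n)\to 1$ for the separation event $C_n$ defined in \eqref{eq:setA}. On $B_n\cap C_n$ the matrix $\diag_k(\bfS)\bfA$ decomposes into $(2k+1)\times(2k+1)$ blocks, each with a single non-zero row, so the only non-zero eigenvalue of the $i$-th block equals $S_{\pi_i}A_{\pi_i,\pi_i}$, matching $\lambda_i(\diag_k(\bfS)\diag(\bfA))$ exactly. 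Part~(2) then follows verbatim after replacing $\bfS,S_i$ by $\bfS-c_n\bfI,S_i-c_n$, since both the diagonal approximation in Theorem~\ref{thm:diagsigma} and the centered point-process limit in Theorem~\ref{thm:ppsig} survive this substitution.

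For the eigenvectors I would use that $\bfv$ is an eigenvector of $\bfS\bfA$ with eigenvalue $\lambda$ if and only if $\bfA^{1/2}\bfv$ is an eigenvector of $\bfA^{1/2}\bfS\bfA^{1/2}$ with the same eigenvalue, which reduces the claim to $\ltwonorm{\bfv_j(\bfS\bfA)-\bfe_{\wt L_j}}\stp 0$. From Theorem~\ref{thm:diagsigma} together with $b_n^{-2}\twonorm{\diag(\bfS)-\diag_k(\bfS)}\stp 0$ I would deduce
\[
\vep^{(n)}:=b_n^{-2}\ltwonorm{\bfS\bfA\,\bfe_{\wt L_j}-S_{\wt L_j}A_{\wt L_j,\wt L_j}\,\bfe_{\wt L_j}}\stp 0,
\]
and from Theorem~\ref{thm:ppsig} applied jointly to the top $j{+}1$ eigenvalues I would conclude that for any $\xi>1$ the spectral-gap event $\Omega_n=\{b_n^{-2}|\lambda_j(\bfS\bfA)-\lambda_i(\bfS\bfA)|>\xi\,\vep^{(n)}\text{ for all }i\neq j\}$ satisfies $\P(\Omega_n)\to 1$; this uses the almost sure distinctness of the limiting Poisson atoms $\Gamma_i^{-2/\alpha}$ present in the representation \eqref{eq:N1}. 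A final application of Proposition~A.7 of \cite{heiny:mikosch:2017:iid} to the projection $\Proj_{\bfe_{\wt L_j}}\bfv_j(\bfS\bfA)$ on $\Omega_n\cap B_n\cap C_n$ delivers the required $\ell_2$-convergence, and part~(2) is identical after the centering substitution.

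The only step that is not a purely mechanical transcription of the proofs of Theorems~\ref{thm:mainsigma2}--\ref{thm:eigenvector} is the bookkeeping for the eigenvalue gaps under the new normalization $b_n$; in particular one must verify that the limit variables $\Gamma_i^{-2/\alpha}$ arising from Theorem~\ref{thm:ppsig} indeed separate the top eigenvalues of $\bfS\bfA$ on the correct scale. I would expect this verification, together with the decay $b_n^{-2}\lambda_{k+1}(\diag(\bfS))\stp 0$ along the slowly growing sequence $(k_p)$ from condition \eqref{SB}, to be the main point requiring care, rather than a genuine obstacle.
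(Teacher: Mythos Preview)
Your proposal is correct and essentially identical to the paper's own proof, which consists of a single sentence stating that the arguments of Theorems~\ref{thm:mainsigma2} and~\ref{thm:eigenvector} carry over verbatim once Theorems~\ref{thm:diag} and~\ref{thm:mainsigma} are replaced by Theorems~\ref{thm:diagsigma} and~\ref{thm:mainsigman}. Your only caveat---that the spectral-gap step and the decay $b_n^{-2}\lambda_{k+1}(\diag(\bfS))\stp 0$ rely on Theorem~\ref{thm:ppsig}, whose extra hypothesis~\eqref{eq:check} is not listed in Theorem~\ref{thm:sigman}---is a fair observation, but it applies equally to the paper's proof and is best read as an implicit standing assumption.
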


In view of Remark~\ref{rem:other}, one can easily extend the results in this section to the case $\beta>1$ in \eqref{eq:p}.

\section{Proof of Theorem~\ref{thm:diag}}\label{sec:proofthm21}\setcounter{equation}{0}
The proof is similar to the one of  Theorem 3.5 in \cite{heiny:mikosch:2017:iid}: one has to replace 
$a_{np}^{-2} Z_{it}$ by $a_{np}^{-2} Z_{it} \sigma_{it}$ and solve a few additional technical difficulties stemming from the dependence in the $\sigma$-field. 
By assumption $\E[Z]=\E[X]=0$ whenever these expections are finite.
Since the Frobenius norm $\frobnorm{\cdot}$ is an upper bound of the spectral norm we have
\begin{equation*}
\begin{split}
a_{np}^{-4}\twonorm{\bfS &- \diag(\bfS)}^2 \le  a_{np}^{-4}\frobnorm{\bfS - \diag(\bfS)}^2\\
&=a_{np}^{-4}\sum_{i, j =1; i\neq j}^p   \sum_{t=1}^n X_{it}^2 X_{jt}^2+a_{np}^{-4}\sum_{i, j =1; i\neq j}^p   \sum_{t_1,t_2=1; t_1\neq t_2}^n X_{i,t_1} X_{j,t_1}X_{i,t_2} X_{j,t_2}\\
&=a_{np}^{-4}\sum_{i, j =1; i\neq j}^p   \sum_{t=1}^n X_{it}^2 X_{jt}^2 \big[\1(|Z_{it} Z_{jt}|  > a_{np}^2)+\1(|Z_{it} Z_{jt}|  \le a_{np}^2)\big]
+I_2^{(n)}\\
&=I_{11}^{(n)}+I_{12}^{(n)}+I_2^{(n)}\,.
\end{split}
\end{equation*}
Thus it suffices to show that each of the expressions on the \rhs\ converges to zero in \pro y.
We have by Markov's inequality for any $\epsilon>0$ and sufficiently small $\delta\in (0,1)$, 
\beao
\P \big( I_{11}^{(n)}>\epsilon\big)&\le&    \sum_{i,j=1,i\ne j}^p\,n\, \P(|Z_{1} Z_{2}| > a_{np}^2)  
\le c\,\dfrac{n\,p^2}{a_{np}^{2\alpha(1-\delta)}}\to 0\,.
\eeao
Here we also used \eqref{eq:z1z2}.
\subsubsection*{The case $\alpha \in (0,2)$.} 
An application of Markov's inequality, finiteness of all moments of $\sigma$ 
and Karamata's theorem for $\alpha<2$
show that for $\vep>0$
\beao
\P\big( I_{12}^{(n)}>\epsilon\big)&\le&
c\, \frac{n}{a_{np}^4} \sum_{i,j=1,i\ne j}^p 
\E[|Z_{1} Z_{2}|^{2}\1(|Z_{1} Z_{2}| \le a_{np}^2)]\\
&\le &c\,n\,p^2 \P(|Z_{1}Z_{2}|>a_{np}^2)\to 0\,,\quad \nto\,.
\eeao

The probability $\P(I_2^{(n)}>\epsilon)$ can be handled in a similar way by applying a 
Karamata argument. 

\subsubsection*{The case $\alpha\in (2,4)$}
Before we proceed we provide an auxiliary result.
Consider the following decomposition 
\beao 
[\bfS - \diag(\bfS)]^2=\bfD+\bfF+\bfR\,,
\eeao
where 
\beao
\bfD=(D_{ij})_{i,j=1,\ldots,p}=\diag([\bfS - \diag(\bfS)]^2)\,.
\eeao
The $p\times p$ matrix $\bfF$ has a zero-diagonal and 
\begin{equation*}
F_{ij}= \sum_{u=1;u\neq i,j}^p \sum_{t=1}^n  X_{it}\, X_{jt}\, X_{ut}^2 ,\quad 1\le i\ne j\le p\,.
\end{equation*}
The $p\times p$ matrix $\bfR$ has a zero-diagonal and 
\begin{equation*}
R_{ij}= \sum_{u=1;u\neq i,j}^p \sum_{t_1=1}^n \sum_{t_2=1; t_2 \neq t_1}^n X_{i,t_1}\, X_{j,t_2}\, X_{u,t_1}\,X_{u,t_2} ,\quad 
1 \le i\neq j\le p\,.
\end{equation*}
The following is the analog of Lemma 4.1 in \cite{heiny:mikosch:2017:iid}. 
\begin{lemma}\label{lem:diagonal} 
Assume the conditions of Theorem~\ref{thm:diag}  and $\alpha\in(2,4)$. 
Then $a_{np}^{-4} \big(\twonorm{\bfD}+\twonorm{\bfF}+\|\bfR\|_2\big)\stp 0$.
\end{lemma}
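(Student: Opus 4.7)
The plan is to bound $\twonorm{\bfD}$, $\twonorm{\bfF}$, and $\twonorm{\bfR}$ separately by $o(a_{np}^4)$ in probability. The common strategy is to truncate $X$-products at level $a_{np}^2$, exploit the zero-mean property $\E[Z]=0$ (available because $\alpha>2$) to kill cross terms in moment expansions, and apply Karamata's theorem to handle the truncated moments of $Z$ past its finite second moment. Finiteness of all $\sigma$-moments combined with H\"older's inequality uniformly controls every mixed $\sigma$-moment arising in the expansions, so the dependence in $(\sigma_{it})$ is benign.

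For $\twonorm{\bfD}$: since $\bfD$ is diagonal, $\twonorm{\bfD}=\max_i D_{ii}$ with $D_{ii}=\sum_{j\ne i}S_{ij}^2$, and row-stationarity together with a union bound reduces matters to showing $p\,\P(D_{11}>\vep a_{np}^4)\to 0$. I would split each product $X_{1t}X_{jt}$ at level $a_{np}^2$ just as in the treatment of $I_{11}^{(n)}$ and $I_{12}^{(n)}$ in the proof of Theorem~\ref{thm:diag}: the large part is swept away by the tail union bound $np^2\P(|Z_1Z_2|>a_{np}^2)\to 0$ from \eqref{eq:z1z2}, and the truncated part is handled by Chebyshev plus Karamata, the cross-time covariances of $(X_{1t}X_{jt})\1(|Z_{1t}Z_{jt}|\le a_{np}^2)$ being essentially zero thanks to $\E[X]=0$. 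For $\twonorm{\bfF}$ and $\twonorm{\bfR}$, pass to the Frobenius norm and expand; in
$$\E[F_{ij}^2]=\sum_{u,v\ne i,j}\sum_{t,s}\E\bigl[X_{it}X_{jt}X_{ut}^2\,X_{is}X_{js}X_{vs}^2\bigr]$$
every summand with $t\ne s$ contains an unpaired factor $Z_{it}$ of mean zero and therefore vanishes, while in $\E[R_{ij}^2]$ the constraint $t_1\ne t_2$ combined with $\E[Z]=0$ forces the two copies of $R_{ij}$ to match their $i$- and $j$-times and to satisfy $u=v$. The surviving terms that would require the infinite fourth moment $\E[Z^4]$ are handled by truncating $X_{ut}^2$ at level $a_{np}^2$ and invoking $\E[Z^4\1(|Z|\le a_{np})]\sim c\,a_{np}^{4-\alpha}$ from Karamata, while the remaining finite-moment terms can be controlled by grouping $\sum_{u\ne v}X_{ut}^2X_{vt}^2=(\sum_u X_{ut}^2)^2-\sum_u X_{ut}^4$ and isolating the main-order contribution $(p\,\E[X^2])^2$.

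The main obstacle is the bookkeeping: the combinatorial counts of surviving index tuples in $\E[F_{ij}^2]$ and $\E[R_{ij}^2]$ must be balanced against the Karamata exponents through $a_{np}^2\sim(np)^{2/\alpha}\ell_1(np)$, and a naive $O(p^4 n)$ bound on the $u\ne v$ part of $\twonorm{\bfF}$ fails as $\alpha\uparrow 4$. This is resolved by the factorization and centering of $\sum_u X_{ut}^2$ around $p\,\E[X^2]$: the main contribution is absorbed into a term of the form $\la_1(\bfS)\cdot p\,\E[X^2]$, which under \eqref{eq:p} and $\alpha\in(2,4)$ is indeed $o(a_{np}^4)$ because $p/a_{np}^2\to 0$.
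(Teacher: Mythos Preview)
Your overall strategy (truncation, moment bounds, Karamata) is along the right lines, but there are two concrete gaps.

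\textbf{The $\bfD$-part.} Your truncation at the level of single products $X_{1t}X_{jt}$ followed by a first/second moment bound does not close the argument on the full range $\alpha\in(2,4)$. After truncation the diagonal-in-$t$ contribution to $D_{11}$ is
\[
\sum_{j\ne 1}\sum_{t=1}^n X_{1t}^2X_{jt}^2\,\1(|Z_{1t}Z_{jt}|\le a_{np}^2),
\]
whose expectation is of order $pn$ because $\E[(Z_1Z_2)^2\1(|Z_1Z_2|\le a_{np}^2)]\to(\E[Z^2])^2<\infty$ for $\alpha>2$. Your union bound then yields $p\cdot pn/a_{np}^4=p^2n/a_{np}^4$, and with $p=n^\beta$ this exponent is $1+2\beta-4(1+\beta)/\alpha$, which is nonnegative for $\alpha\ge 4(1+\beta)/(1+2\beta)$; at $\beta=1$ this means the bound fails for all $\alpha\ge 8/3$. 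The paper avoids this by a different truncation: writing $D_{ii}=M_{ii}+N_{ii}$ (same $t$ versus $t_1\ne t_2$), it truncates $M_{ii}$ at the level of the entire inner sum via $A_{i,u}=\{\sum_t X_{it}^2X_{ut}^2>a_{np}^2\}$. On $A_{i,u}^c$ one has the deterministic bound $M_{ii}^{(2)}\le p\,a_{np}^2$, hence $a_{np}^{-4}\|\bfM^{(2)}\|_2\le p/a_{np}^2\to 0$; on $A_{i,u}$ one first caps the $\sigma$'s at $(np)^{1/(4r)}$ and then applies the von~Bahr--Ess\'een inequality with exponent $q<\alpha/2$ to the centered sum $\sum_t(Z_{1t}^2Z_{2t}^2-(\E Z^2)^2)$, giving the bound $c\,p^2 n(np)^{q/r}/a_{np}^{4q}$, which does vanish for $q$ close to $\alpha/2$ and $r$ large. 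The point is that a $q$-th moment inequality with $q$ near $\alpha/2$ is needed; a plain Markov/Chebyshev bound is too crude here.

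\textbf{The $\bfF$-part.} Your diagnosis of the obstacle (the naive $O(p^4n)$ Frobenius bound fails as $\alpha\uparrow4$) is correct, and the factorization $\sum_{u}X_{ut}^2=p\E[X^2]+\text{fluctuation}$ is a natural idea. But the conclusion ``absorbed into $\la_1(\bfS)\cdot p\E[X^2]$, which is $o(a_{np}^4)$ because $p/a_{np}^2\to 0$'' presupposes $\la_1(\bfS)=O_P(a_{np}^2)$. That is precisely what Theorem~\ref{thm:diag} delivers \emph{via} the present lemma, so as written the argument is circular. One can in principle unwind this by feeding the bound $\|\bfF\|_2\le p\E[X^2]\|\bfS-\diag(\bfS)\|_2+o_P(a_{np}^4)$ back into $\|\bfS-\diag(\bfS)\|_2^2=\|\bfD+\bfF+\bfR\|_2$ and solving the resulting quadratic inequality in $\|\bfS-\diag(\bfS)\|_2/a_{np}^2$, but that bootstrap is not what you wrote, and it still requires the $\bfD$- and $\bfR$-parts (and the fluctuation part of $\bfF$) to be handled independently. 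The paper does not pursue this route for $\bfF$ and $\bfR$; it observes that $X=\sigma Z$ is itself regularly varying with index $\alpha$ and that the arguments of \cite{heiny:mikosch:2017:iid} for the iid case carry over verbatim, the $\sigma$-dependence being harmless.
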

In view of this lemma we have 
\beao
a_{np}^{-4}\twonorm{\bfS - \diag(\bfS)}^2= 
a_{np}^{-4}\twonorm{[\bfS - \diag(\bfS)]^2}
=a_{np}^{-4}\twonorm{\bfD+\bfF+\bfR}\stp 0\,.
\eeao
This finishes the  proof of Theorem~\ref{thm:diag}. Our final goal is to prove Lemma~\ref{lem:diagonal}.

\begin{proof}[Proof of the $\bfD$-part]
We have for $i=1,\ldots,p$,
\begin{equation*}
\begin{split}
D_{ii}&= \sum_{u=1}^p \sum_{t=1}^n  X_{it}^2 X_{ut}^2 \1(i\neq u)
+\sum_{u=1}^p \sum_{t_1=1}^n \sum_{t_2=1}^n X_{i,t_1} X_{u,t_1}X_{u,t_2} X_{i,t_2} \1(i\neq u)\1(t_1\neq t_2)\\ &= M_{ii}+N_{ii}\,.
\end{split}
\end{equation*}
We write $\bfM$ and $\bfN$ for diagonal matrices constructed from $(M_{ii})$ and $(N_{ii})$ such that $\bfD=\bfM+\bfN$.
First bounding $\|\bfN\|_2$ by the Frobenius norm and then applying Markov's inequality and using the fact that the $Z$'s are centered, one can  prove that $a_{np}^{-4} \twonorm{\bfN}\stp 0$.
Writing $A_{i,u}=\{|\sum_{t=1}^n  X_{it}^2 X_{ut}^2| > a_{np}^2 \}$,
we have for $i=1, \ldots,p$,
\beao
M_{ii}&=& \sum_{u=1,u\ne i}^p \sum_{t=1}^n  X_{it}^2 X_{ut}^2\,\big[\1_{A_{i,u}}+
\1_{A_{i,u}^c}\big]
= M_{ii}^{(1)} + M_{ii}^{(2)}. 
\eeao
On one hand, $\twonorm{\bfM^{(2)}}\le p\, a_{np}^2$. Hence $a_{np}^{-4} \twonorm{\bfM^{(2)}}\stp 0$. On the other hand, 
we obtain with Markov's inequality for $\epsilon>0$ and $r>0$,
\beao
\lefteqn{\P(\twonorm{\bfM^{(1)}} >\epsilon\, a_{np}^{4})}\\ &=& \P(\max_{i=1,\ldots, p} |M_{ii}^{(1)}| >\epsilon\,a_{np}^{4})\\
&\le & \P\Big(\max_{i=1,\ldots, p}\Big|  
\sum_{u=1,u\ne i}^p \sum_{t=1}^n \sigma_{it}^2\sigma_{ut}^2 \1\big(\max_{1\le j\le p,1\le s\le n}\sigma_{js} >(np)^{1/ (4r)}\big)Z_{it}^2 Z_{ut}^2
\,\1_{A_{i,u}}\Big|>\epsilon\, a_{np}^{4}\Big)\\
&&+\P\Big(\max_{i=1,\ldots, p}\Big|  
\sum_{u=1,u\ne i}^p \sum_{t=1}^n \sigma_{it}^2\sigma_{ut}^2 \1\big(\max_{1\le j\le p,1\le s\le n}\sigma_{js} \le (np)^{1/ (4r)}\big)Z_{it}^2 Z_{ut}^2
\,\1_{A_{i,u}}\Big|>\epsilon\, a_{np}^{4}\Big)\\
& \le &np\,\P(|\sigma|>(np)^{1/(4r)})\\&&+
\P\Big(\max_{i=1,\ldots, p}\Big|  
\sum_{u=1,u\ne i}^p \sum_{t=1}^n Z_{it}^2 Z_{ut}^2
\,\1\big(\sum_{t=1}^n Z_{it}^2Z_{ut}^2>a_{np}^4/(np)^{1/r}\big)\Big|>\epsilon\, a_{np}^{4}/(np)^{1/r}\Big)\\
&=& J_1+J_2\,.
\eeao
Since $\E [\sigma^{4r}]<\infty$ we have $J_1\to 0$. We also have for large $n$, sufficiently large $r>0$,
 by the von Bahr and Ess\'een inequality (see Petrov \cite{petrov:1995}, 2.6.20 on p. 82) for $q<\alpha/2$ close to $\alpha/2$, 
\beao
J_2&\le &p^2\,\P\Big(\sum_{t=1}^n Z_{1t}^2Z_{2t}^2>a_{np}^4/(np)^{1/r}\Big) \sim 
p^2\,\P\Big(\sum_{t=1}^n (Z_{1t}^2Z_{2t}^2-(\E [Z^2])^2)>a_{np}^4/(np)^{1/r}\Big) \\
&\le & c\,p^2 \dfrac{(np)^{{q/r}}}{a_{np}^{4q}} \E\Big[\Big|\sum_{t=1}^n (Z_{1t}^2Z_{2t}^2-(\E [Z^2])^2\Big|^q\Big]
\le c\,\dfrac{p^2\, n\,(np)^{{q/r}}}{a_{np}^{4q}}\to 0\,,\qquad \nto\,.
\eeao
\end{proof}

\begin{proof}[Proof of the $\bfF$-~and $\bfR$-parts]
The key observation is that $X=Z\sigma$ is regularly varying with index $\alpha$. Choose $\wt a_n$ such that $\P(X>\wt a_n)\sim n^{-1}$. The sequences $a_n$ and $\wt a_n$ only differ by a slowly varying function which is negligible for the techniques in \cite{heiny:mikosch:2017:iid}. These techniques also work under the dependence stemming from the $\sigma$-field. Therefore the proofs of the $\bfF$-~and $\bfR$-parts are identical to \cite{heiny:mikosch:2017:iid}. 
\end{proof}

\section{Proof of Theorem~\ref{thm:pp}}\label{sec:proofthm24}\setcounter{equation}{0}
In view of \eqref{eq:weyl} a continuous mapping argument shows that the points $(\la_i(\bfS)-c_n)/a_{np}^2$ in $N_n$ may be replaced by
the points $(S_i-c_n)/a_{np}^2$. We denote the resulting \pp\ by
\beao
\wt N_n=\sum_{i=1}^p \vep_{a_{np}^{-2}(S_i-c_n)}\,.
\eeao  
We intend to use Kallenberg's theorem for proving $\wt N_n\std N$; see 
Resnick \cite{resnick:1987}, Proposition 3.22. For this reason, we have to
show the following relations as $\nto$\,,
\beam
\E [\wt N_n(x,\infty)]&\to& \E[N(x,\infty)]= \E[\sigma^\alpha]\,x^{-\alpha/2}\,,\qquad x>0\,,\label{eq:a1}\\
\E [\wt N_n(-\infty,-x)]&\to& \E[N(-\infty,-x)]=0\,,\qquad x>0\,,\label{eq:a2}\\
\P(\wt N_n(e_i,d_i]=0\,,i=1,\ldots,m)&\to& \P(N(e_i,d_i]=0\,,i=1,\ldots,m)\,,\label{eq:a3}
\eeam
where $0<e_1<d_1<\cdots <e_m<d_m<\infty$, $m\ge 1$, are any positive numbers.
We observe that for $S=S_1$,
\beam\label{eq:help1}
\E [\wt N_n(x,\infty)]&=&p\,\P(S>a_{np}^2 x+c_n)\,,\\
\E [\wt N_n(-\infty,-x)]&=&p\,\P(S< -a_{np}^2 x+c_n)\,,\label{eq:help2}
\eeam
Then \eqref{eq:a1} and \eqref{eq:a2}
will be a con\seq\ of the following \ld\ result which is a straightforward application 
of Theorem~4.2 in Mikosch and Wintenberger \cite{mikosch:wintenberger:2013}.
\ble\label{lem:nag}
Assume the conditions of Theorem~\ref{thm:pp}. Write $\gamma_n=n^{2/\alpha+\epsilon}$ for any $\epsilon>0$.
\begin{enumerate}
\item
If $\alpha\in (0,2)$ we have
\beam\label{eq:ld1}
\sup_{y\ge \gamma_n}\Big|\dfrac{\P(S >y)}{n\,\P(X^2>y)}-1\Big|&\to& 0\,.
\eeam
\item
If $\alpha\in (2,4)$ we also assume that
$(\sigma_t)=(\sigma_{it})$ is strongly mixing with rate $(\alpha_j)$  such that $\alpha_j\le c\,j^{-a}$ for some $a>1,c>0$.
Then we have
\beao
\sup_{y\ge \gamma_n}\Big|\dfrac{\P\Big(\sum_{t=1}^n \sigma_{it}^2(Z_{it}^2-\E[Z^2])>y\Big)}{n\,\P(X^2>y)}
-1\Big|&\to& 0\,,\\
\sup_{y\ge \gamma_n}\dfrac{\P\Big(\sum_{t=1}^n \sigma_{it}^2(Z_{it}^2-\E[Z^2])\le -y\Big)}{n\,\P(X^2>y)}
&\to& 0\,.
\eeao
\end{enumerate}
\ele
Then \eqref{eq:a1} and \eqref{eq:a2} follow for $\alpha\in (0,2)$ in view of \eqref{eq:help1}, \eqref{eq:help2}
and by choosing $y=a_{np}^2 x$ in \eqref{eq:ld1}. Indeed, in view of Breiman's lemma,
\beam\label{eq:jj}
p\,\P( S>a_{np}^2x)\sim np\,\P(X^2>a_{np}^2 x)\sim \E[\sigma^{\alpha}] \,np\,\P(|Z|>a_{np} \sqrt{x}) \to\E[\sigma^{\alpha}]\, 
x^{-\alpha/2}\,.
\eeam
The case $\alpha\in (2,4)$ follows in the same way but we also have to show that the \rhs\ in
\beao
p \,\dfrac{\P\Big(a_{np}^{-2}\Big| \sum_{t=1}^n(\sigma_t^2-\E[\sigma^2])\Big|>x\Big)}{n\,\P(X^2>a_{np}^2)}
\sim c\,p^2\,\P\Big(a_{np}^{-2} \Big|\sum_{t=1}^n(\sigma_t^2-\E[\sigma^2])\Big|>x\Big)
\eeao
converges to zero.  By Markov's inequality, the right-hand expression is bounded by 
\beao
c\,x^{-4} \underbrace{\dfrac{(np)^2}{a_{np}^8}}_{\to 0} \E\Big[\Big(n^{-1/2}\sum_{t=1}^n(\sigma_t^2-\E[\sigma^2])\Big)^4\Big]\,.
\eeao
In view of the growth rate of $(\alpha_j)$ and the fact that $\sigma^2\le M$ a.s.,
Theorem 2.5 in \cite{rio:2013} shows that the moments on the \rhs\ converges to a constant, hence
\eqref{eq:jj} converges to zero for $\alpha\in (2,4)$.
\par
Write $\bbf_\sigma$ for the  $\sigma$-algebra generated by $(\sigma_{it})$. 
In what follows, we use the notation $\Ps(\cdot):=\P(\cdot \mid \bbf_\sigma)$ and $\Es[\cdot]:=\E[\cdot \mid \bbf_\sigma]$ for conditional probabilities and expectations with respect to $\bbf_\sigma$.
By independence between $(\sigma_{it})$ and $(Z_{it})$ we have
\beao
\P(\wt N_n(e_i,d_i]=0\,,i=1,\ldots,m)&=& \E\big[\prod_{i=1}^m\Ps(\wt N_n(e_i,d_i]=0)\big]\,.
\eeao
We intend to show that $\wt N_n(e_i,d_i]\std \Pois(\mu_\alpha(e_i,d_i])$ given $\bbf_\sigma$. Then
\eqref{eq:a3} follows. By Poisson's limit theorem (see Billingsley \cite{billingsley:2012}, 
 Theorem 23.2), the latter limit holds if
\beao
\Es \big[\wt N_n(e_i,d_i]\big]\to   \mu_\alpha(e_i,d_i]\,.
\eeao
\ble 
Assume the conditions of Theorem~\ref{thm:pp}. For $\alpha\in (0,2)\cup (2,4)$ and $x>0$, we have
\beam\label{eq:lda}
\Es \big[\wt N_n(x,\infty)\big]&=& \sum_{i=1}^p \Ps \big((S_i-c_n)/a_{np}^2>x \big)\to\mu_\alpha(x,\infty)\,,\\
\Es \big[\wt N_n(-\infty,-x) \big]&=& \sum_{i=1}^p \Ps\big((S_i-c_n)/a_{np}^2<-x\big)\to 0\,.\label{eq:ldb}
\eeam
\ele
\begin{proof} We only show \eqref{eq:lda}, the relation \eqref{eq:ldb} can be proved in a similar way.
We start with the case $\alpha\in (0,2)$ and briefly comment on the case $\alpha\in (2,4)$ at the end of this proof.
We will show that
\beam\label{eq:eqo}
\sup_{i=1,\ldots,p}
\Big|\dfrac{\Ps\big(S_i/a_{np}^2>x\big)}
{\sum_{t=1}^n \sigma_{it}^\alpha\,\P(Z^2>a_{np}^2x)}-1\Big|\to 0\,,\qquad \nto\,.
\eeam
Then by definition of $(a_{np}^2)$ and the ergodic theorem for $(\sigma_{it})$,
\beao
 \sum_{i=1}^p \Ps\big(S_i/a_{np}^2>x\big)&\sim& np\,\P(Z^2>a_{np}^2\,x)
\Big( \dfrac 1 {np}\sum_{i=1}^p\sum_{t=1}^n\sigma_{it}^\alpha \Big)\\
& \to & \E [\sigma^\alpha]\,x^{-\alpha/2}=\mu_\alpha(x,\infty)\,.
\eeao
\par
For ease of presentation,
in the proof of \eqref{eq:eqo} we assume that $x=1$.
Let $i\in \{1,\ldots,p\}$. For small $\epsilon>0$ we have
\beao
\lefteqn{\Ps\big(S_i/a_{np}^2>1 \big)}\\&\le &
\sum_{t=1}^n \Ps\big(\sigma_{it}^2Z_{it}^2> a_{np}^2(1-\epsilon) \big)
+\Ps\big(S_i- \max_{s=1,\ldots,n} \sigma_{is}^2Z_{is}^2>\epsilon a_{np}^2\big)\\
&=&I_{i1}+I_{i2}\,.
\eeao
In view of the uniform \con\ theorem for \regvary\ \fct s and since we assume $\sigma$ to be bounded
we have 
\beam\label{eq:w1}
\lim_{\epsilon\downarrow 0}\limsup_{\nto} \sup_{i=1,\ldots,p}\dfrac{I_{i1}}{\sum_{t=1}^n\sigma_{it}^\alpha\,\P(Z^2>a_{np}^2(1-\epsilon))}\le 1\quad \as
\eeam
For $\delta>0$, we define the counting variable $T_i(\delta)=\sum_{t=1}^n \1(\sigma_{it}^2Z_{it}^2>\delta\,a_{np}^2)$ and consider the disjoint partition 
\begin{equation*}
\{T_i(\delta)\ge 2\}\,, \quad \{T_i(\delta)=1\}\,, \quad \{T_i(\delta)=0\}\,.
\end{equation*}
We have by the same argument as for $I_{i1}$,
\beao
\limsup_{\nto}\sup_{i=1,\ldots,p}\dfrac{\Ps(T_i(\delta)\ge 2)}{\big(\sum_{t=1}^n\sigma_{it}^\alpha \P(Z^2>\delta a_{np}^2)\big)^2}=c(\delta)\quad \as\,,
\eeao
for some constant $c(\delta)$ and therefore the contribution of the set $\{T_i(\delta)\ge 2\}$ is negligible.
Moreover,
\begin{equation*}
\begin{split}
\Ps\Big(T_i(\delta)&=1,S_i- \max_{t=1,\ldots,n} \sigma_{it}^2Z_{it}^2>\epsilon a_{np}^2\Big)
\le \sum_{t=1}^n \Ps(\sigma_{it}^2Z_{it}^2>\delta a_{np}^2, S_i-\sigma_{it}^2Z_{it}^2>\epsilon a_{np}^2)\\
&= \sum_{t=1}^n \Ps(\sigma_{it}^2Z_{it}^2>\delta a_{np}^2) \Ps (S_i-\sigma_{it}^2Z_{it}^2>\epsilon a_{np}^2) = o(1)\,c\,\P(Z^2>a_{np}^2)\,\sum_{t=1}^n\sigma_{it}^\alpha\,,
\end{split}
\end{equation*}
where $o(1)$ does not depend on $i$. Here we used the same argument as for \eqref{eq:w1}.
As regards the set $\{T_i(\delta)=0\}$, we have 
\beao
\lefteqn{\Ps\big(T_i(\delta)=0, S_i- \max_{t=1,\ldots,n} \sigma_{it}^2Z_{it}^2>\epsilon a_{np}^2\big)}\\
&\le &\Ps\big(\max_{t=1,\ldots,n-1}\sigma_{it}^2Z_{it}^2\le \delta
a_{np}^2\,,  S_i- \sigma_{in}^2Z_{in}^2>\epsilon a_{np}^2\big)\\
&\le & \Ps\Big(a_{np}^{-2} \sum_{t=1}^n \sigma_{it}^2 Z_{it}^2 \1(\sigma_{it}^2 Z_{it}^2\le \delta a_{np}^2)>\epsilon \Big)=I_{i3}\,.
\eeao
Since $\sigma^2\le M$ and $p\to\infty$ we have by Karamata's theorem
\beam\label{eq:karam}
a_{np}^{-2}\sum_{t=1}^{n}\Es\big[\sigma_{it}^2 Z^2 \1(\sigma_{it}^2 Z_{it}^2\le \delta a_{np}^2)\big]
&\le &a_{np}^{-2} n\,M\,\E\big[Z^2 \1(M Z^2\le \delta a_{np}^2)\big]\to 0\,. 
\eeam
Hence for large $n$,
\beao
I_{i3}&\le &\Ps\Big(
a_{np}^{-2} \sum_{t=1}^n 
\Big(\sigma_{it}^2 Z_{it}^2  \1(\sigma_{it}^2 Z_{it}^2\le \delta a_{np}^2)
-\Es\big[\sigma_{it}^2 Z^2 \1(\sigma_{it}^2 Z^2\le \delta a_{np}^2)\big]\Big)
>\epsilon/2 \Big)\,.
\eeao
An application of the Fuk-Nagaev inequality (see Petrov \cite{petrov:1995}, p.~78, 2.6.5) yields for $r\ge 2$, $c_1,c_2>0$,
\beao
I_{i3}&\le & a_{np}^{-2r} c_1\sum_{t=1}^n \Es\big[|\sigma_{it} Z|^{2r} \1(\sigma_{it}^2 Z^2\le \delta a_{np}^2)\big]\\
&&+\exp\Big(-c_2 a_{np}^4\Big/\sum_{t=1}^n \var\big(\sigma_{it}^2 Z_{it}^2  \1(\sigma_{it}^2 Z_{it}^2\le \delta a_{np}^2)\mid\bbf_\sigma\big)\Big)\,.
\eeao
An argument similar to \eqref{eq:karam} shows that 
\beao
\limsup_{\nto} \sup_{i=1,\ldots,p}\dfrac{I_{i3}}{ \sum_{t=1}^n\sigma_{it}^\alpha \,\P(Z^2 >a_{np}^2)}=0\quad \as 
\eeao
Summarizing the previous bounds and observing that all of them are uniform in $i$, we proved for given $\epsilon$ 
and sufficiently large $n$ that, with \pro y~1,
\beao
 \sum_{i=1}^p \Ps\big(S_i/a_{np}^2>x\big)\le (1+\epsilon) 
\P(Z^2>a_{np}^{2})\sum_{i=1}^p\sum_{t=1}^n \sigma_{it}^\alpha\,.
\eeao
\par
Next, we show the corresponding lower bound.
In view of the uniform \con\ theorem for \regvary\ \fct s and since we assume $\sigma$ to be bounded
we have for $x=1$ and $\epsilon>0$,
\beao
 \Ps\big(S_i/a_{np}^2>x \big)&\ge &\Ps\big( \max_{t=1,\ldots,n} \sigma_{it}^2Z_{it}^2>(1+\epsilon) a_{np}^2 \big)\\
&\ge &\sum_{t=1}^n 
\Ps\big(\sigma_{it}^2Z_{it}^2> (1+\epsilon) a_{np}^2\big)\\
&&-\sum_{1\le s<t\le n}\Ps\big(\sigma_{it}^2Z^2> (1+\epsilon) a_{np}^2\big)\Ps\big(\sigma_{is}^2Z^2> (1+\epsilon) a_{np}^2\big)\\
&=& \sum_{t=1}^n\sigma_{it}^\alpha \,\,\P(Z^2>a_{np}^2)\, (1+\epsilon)^{-\alpha/2}\, (1+o(1))
\eeao
Since this bound is uniform in $i$, 
we conclude that, for given $\epsilon>0$ and sufficiently large $n$, 
\beao
\sum_{i=1}^p \Ps\big(S_i/a_{np}^2>x\big)\ge (1-\epsilon) 
\P(Z^2>a_{np}^{2})\sum_{i=1}^p\sum_{t=1}^n \sigma_{it}^\alpha\,.
\eeao
This proves the lemma in the case $\alpha\in (0,2)$. 
\par
In the case $\alpha\in (2,4)$, first replace the points $(S_i-c_n)/a_{np}^2$
by $a_{np}^{-2}\sum_{t=1}^n \sigma_{it}^2 (Z_{it}^2-\E [Z^2])$. The argument is similar to the one after Lemma~\ref{lem:nag}. Now one can 
follow the lines of the proof in the case $\alpha\in (0,2)$. We omit details. 
\end{proof}
\section{Proof of Theorem~\ref{thm:diagsigma}}\label{sec:proofthm31}\setcounter{equation}{0}
The proof is similar to the proof of Theorem 3.5 in \cite{heiny:mikosch:2017:iid} and to the proof of Theorem~\ref{thm:diag}.
We will sketch the proof, illustrating the differences one has to pay attention to. We restrict ourselves to the case 
$\alpha\in (0,8/3)\backslash \{2\}$; the case $\alpha\in [8/3,4)$ can be handled in a way  similar to Theorem~\ref{thm:diag}. Indeed, the proof is
even simpler because the field $(\sigma_{it}^{(n)})$ is iid. 
\par
Since the Frobenius norm $\frobnorm{\cdot}$ is an upper bound of the spectral norm we have
\beao
&\phantom{\!\!\!\!}& b_{n}^{-4}\twonorm{\bfS - \diag(\bfS)}^2\le b_{n}^{-4}\frobnorm{\bfS - \diag(\bfS)}^2 \\
 &=& \!\!\!\! b_{n}^{-4}\sum_{i, j =1; i\neq j}^p   \sum_{t=1}^n (X_{it}^{(n)})^2 (X_{jt}^{(n)})^2+b_{n}^{-4}\sum_{i, j =1; i\neq j}^p   
\sum_{t_1,t_2=1; t_1\neq t_2}^n X_{i,t_1}^{(n)} X_{j,t_1}^{(n)}X_{i,t_2}^{(n)} X_{j,t_2}^{(n)}\\
&=& \!\!\!\!b_{n}^{-4} \!\!\!\!\sum_{i, j =1; i\neq j}^p   \sum_{t=1}^n (X_{it}^{(n)})^2 (X_{jt}^{(n)})^2 \big[\1( (X_{it}^{(n)})^2 
(X_{jt}^{(n)})^2  > b_{n}^4)+\1( (X_{it}^{(n)})^2 (X_{jt}^{(n)})^2  \le b_{n}^4 )\big]
+I_2^{(n)}\\
&=& \!\!\!\! I_{11}^{(n)}+I_{12}^{(n)}+I_2^{(n)}\,.
\eeao
Thus it suffices to show that each of the expressions on the \rhs\ converges to zero in \pro y.
By \eqref{eq:w2} and the Potter bounds for \regvary\ \fct s we have for any $\epsilon>0$ and $\nto$, 
\begin{equation*}
\P \big( I_{11}^{(n)}>\epsilon\big)\le   p^2\,n\, \P((X_{1}^{(n)})^2 (X_{2}^{(n)})^2 > b_{n}^4)\sim p^2\,n\, 
(\E [(\sigma^{(n)})^\alpha])^2\,\P(|Z_1Z_2|>b_{n}^2)\to 0\,.
\end{equation*}
Here we also used that $\P(|Z_1Z_2|>x)$ is \regvary\ with index $\alpha$.\\
Assume first $\alpha\in (0,2)$.
Applications of Markov's inequality, Karamata's theorem and the Potter bounds yield
\beao
\lefteqn{\P\big( I_{12}^{(n)}>\epsilon\big)}\\&\le&
c\,\frac{p^2\,n}{b_{n}^4}\, \E[|X_{1}^{(n)} X_{2}^{(n)}|^{2}\1(|X_{1}^{(n)} X_{2}^{(n)}| \le b_{n}^2)]\\
&=&c\, p^2 \,n\,\sum_{i,j=1}^m q_i^{(n)}q_j^{(n)}\,
\dfrac{s_i^2s_j^2 \E[(Z_1Z_2)^2\1(s_is_j |Z_1Z_2|\le b_{n}^2)]}{b_{n}^4\,
\P(s_is_j |Z_1Z_2|> b_{n}^2)}\, \P(s_is_j |Z_1Z_2|> b_{n}^2)\\
&\sim &c\, p^2 \,n\,\sum_{i,j=1}^m q_i^{(n)}q_j^{(n)} s_i^{\alpha}\,s_j^{\alpha}\P(|Z_1Z_2|> b_{n}^2)\,\\
&=&c\, p^2 \,n \,(\E [(\sigma^{(n)})^{\alpha}])^2\,\P(|Z_1Z_2|> b_{n}^2)\to 0\,,\quad \nto\,.
\eeao
If $\alpha\in (2,8/3)$ we have $\E[Z^2]<\infty$. Hence
\beao
\P\big( I_{12}^{(n)}>\epsilon\big)&\le&
c\,\frac{p^2\,n}{b_{n}^4}\, \E[|(X_{1}^{(n)} X_{2}^{(n)}|^{2}]
= c\,\frac{p^2\,n}{b_{n}^4}(\E[(\sigma^{(n)})^2])^2\to 0\,.
\eeao
Here we also used the fact that all moments of $\sigma ^{(n)}$ are of the same size; see Remark~\ref{rem:v}.
\par
For $\alpha\in (0,2)$, the probability $P_{2}^{(n)}=\P(I_2^{(n)}>\epsilon)$ can be handled analogously; we omit details.
We turn to $P_{2}^{(n)}$ in the case $\alpha \in (2,8/3)$. In particular, we have $\E[Z]=0$ and $\E [Z^2]<\infty$. 
With \v Cebychev's inequality, also using the independence and the fact that $\E [X^{(n)}]=0$,  we find that
\beao
P_{2}^{(n)} &\le& c \frac{1}{b_{n}^8} \E\Big[ \Big( \sum_{i, j =1; i\neq j}^p   \sum_{t_1,t_2=1; t_1\neq t_2}^n X_{i,t_1}^{(n)} X_{j,t_1}^{(n)}X_{i,t_2}^{(n)} X_{j,t_2}^{(n)}\Big)^2 \Big]\\&\le& 
c\, \frac{(p\,n)^2}{b_{n}^8 }\,\big(\E[(\sigma^{(n)})^2]\big)^4\to 0\,,\quad \nto\,.
\eeao
This finishes the proof.
\section{Proof of Theorem~\ref{thm:ppsig}}\label{sec:proofthm3.2}\setcounter{equation}{0}
In what follows, we will write $S$ for a generic element
of the \seq\ of diagonal entries $(S_i)$. 
Since we have
\beao
b_{n}^{-2}\max_{i=1,\ldots,p}\|(\la_{i}(\bfS)-c_n)-(\la_i(\diag(\bfS))-c_n)\|_2\stp 0\,,\qquad\nto\,,
\eeao
a continuous mapping argument shows that it suffices to show the \pp\ \con\
\beao
\wt N_n= \sum_{i=1}^p \vep_{b_{n}^{-2} (S_i-c_n)}\std N\,,\qquad \nto\,.
\eeao
Since the points $(S_i)$ are independent it suffices to show that for $x>0$,
\beam\label{eq:b5}
\E[\wt N_n(x,\infty)]&=&p\,\P(S>x b_{n}^2 +c_n)\to \E[N(x,\infty)]=x^{-\alpha/2}\,,\\
\E[\wt N_n(-\infty,x)]&=&p\,\P(S<-x b_{n}^2+c_n)\to \E[N(-\infty,-x)]=0\,.\label{eq:b6}
\eeam
We restrict ourselves to prove \eqref{eq:b5}; the proof of \eqref{eq:b6} is analogous.
For generic \seq s $(Z_t)$ and $(\sigma_t^{(n)})$ we have the \rep\
\beao
S= \sum_{j=1}^m s_j^2\sum_{t=1}^n Z_t^2 \1(\sigma_t^{(n)}=s_j)=\sum_{j=1}^ms_j^2 \sum_{t\in A_j} Z_t^2\,,
\eeao
where $A_j=A_j^{(n)}= \{1\le t\le n: \sigma_t^{(n)}=j\}$. Write $M_j$ for the cardinality of $A_j$. Then we have the \rep
\beao
S\eqd \sum_{j=1}^m s_j^2\,T_j\,,\quad\mbox{where
$T_j=\sum_{t=1}^{M_j} Z_{jt}^2$\,,}
\eeao
and $(M_j)$ and $(Z_{jt})_{t=1,2,\ldots;j=1,\ldots,m}$ are independent. We observe that $M_j$ is binomially distributed 
with mean $\E[M_j]= n\,q_j^{(n)}$. The next lemma concludes the proof of Theorem~\ref{thm:ppsig}.
\ble Assume the conditions of Theorem~\ref{thm:ppsig}. Then \eqref{eq:b5} holds.
\ele
\begin{proof} 
Define $\tau_j= \lim_{\nto} n\,q_j^{(n)}$, $j=1,\ldots,m$. 
We will consider two cases:
\begin{enumerate}
\item
At least one $\tau_j$ is infinite.
\item
All $\tau_j$ are finite.
\end{enumerate}
Throughout we assume $\alpha\in (0,2)$; the case $\alpha\in (2,4)$ is analogous, taking into account
the centering $c_n$ for $S$.  
\par
We start with the case that $\tau_k=\infty$. If $0<\tau_j<\infty$ for some 
$j\ne k$ we will show that $s_j^2 T_j$ does not contribute to  $\lim_{\nto}p\P\big(S>x\,b_n^2\big)$. In this case, $M_j\std Y_j\sim \Pois(\tau_j)$ 
and  $\E\big[\ex^{hM_j}\big] \to \E[\ex^{hY_j}]$, $h>0$.
We have by Markov's inequality for positive $h,\epsilon$,
\beao
p\,\P(s_j^2 T_j> \epsilon b_n^2)&=&p\,\P(Z^2>b_n^2)\,\sum_{k=1}^\infty \P(M_j=k)\,
\dfrac{\P\Big(\sum_{t=1}^k Z_t^2 >\epsilon b_n^2\Big)}{\P(Z^2>b_n^2)}\\
&\le &p\,\P(Z^2>b_n^2)\,\E\big[\ex^{hM_j}\big]\sum_{k=1}^\infty \ex^{-hk}
\dfrac{\P\Big(\sum_{t=1}^k Z_t^2 >\epsilon b_n^2\Big)}{\P(Z^2>b_n^2)}\\
&\sim &  \epsilon^{-\alpha/2} \dfrac 1 {n\E[(\sigma^{(n)})^\alpha]}\underbrace{pn\,\E[(\sigma^{(n)})^\alpha] \P(Z^2>b_n^2)}_{\sim 1} \,\E\big[\ex^{hY_j}\big]\sum_{k=1}^\infty k\,\ex^{-hk}\to 0\,.
\eeao
Here we also used the subexponential property of the \ds\ of $Z^2$ (see Theorem A3.20 in Embrechts et al. \cite{embrechts:kluppelberg:mikosch:1997}).
\par
Therefore we assume for the rest of the proof of case (1) 
that $\tau_j=\infty$ for all $1\le j\le m$.
\par
We have for small $\epsilon>0$, 
\beam\label{eq:upper}
\P\big(S>x\,b_n^2\big)\le \sum_{j=1}^m\P\big(s_j^2 T_j>xb_n^2(1-\epsilon)\big)
+\P\Big( \bigcap_{k=1}^m |S- s_k^2 T_k|>\epsilon x b_n^2\Big)=I_1+I_2\,. 
\eeam
First we deal with $I_1$.
We notice that  $b_n^2/  (nq_j^{(n)})^{2/\alpha}\to\infty$. Our goal is to apply classical large deviation results 
(see Theorem A.1 in \cite{heiny:mikosch:2017:iid}) after replacing $M_j$ by $\E[M_j]$. We have for small~$\delta$,
\beao
J_j&=&\P\big(s_j^2 T_j>xb_n^2(1-\epsilon)\big)\\&= &\P\big(s_j^2 T_j>xb_n^2(1-\epsilon)\,,|M_j-\E[M_j]|\le \delta \E[M_j]\big)\\
&&+ \P\big(s_j^2 T_j>xb_n^2(1-\epsilon)\,,|M_j-\E[M_j]| >\delta \E[M_j]\big)=J_{j1}+J_{j2}\,.
\eeao
We have
\beao
J_{j2}\le \P\big(|M_j-\E[M_j]| >\delta \E[M_j]\big)=\P\big(M_j>(1+\delta) \E[M_j]\big)+\P\big(M_j<(1-\delta)\, \E[M_j])\big)\,.
\eeao
An application of Markov's exponential inequality yields for $h=\log(1+\delta)$
and small $\delta>0$, 
\beao
\P\big(M_j>(1+\delta) \E[M_j]\big)&\le& \ex^{-h(1+\delta)\,nq_j^{(n)}}
\Big(1-q_j^{(n)} (1-\ex^h) \Big)^n\\&\le& 
\ex^{-nq_j^{(n)}\big(h(1+\delta)+(1-\ex^h)\big)}\\
&=&\ex^{-nq_j^{(n)}\big( (1+\delta)\log (1+\delta)-\delta)\big)}\\
&\le & \ex^{-0.5 \,\delta^2 nq_j^{(n)}}\,.
\eeao 
A similar argument shows that for small $\delta>0$,
\beao
\P\big(M_j<(1-\delta)\, \E[M_j])\big)\le  \ex^{-0.5 \,\delta^2 nq_j^{(n)}}
\eeao
In view of condition \eqref{eq:check} we have
\beao
p\, J_{j2}\le 2\,p\,\ex^{-0.5 \,\delta^2 nq_j^{(n)}}\to 0\,,\qquad \nto\,.
\eeao

We also have in view of  Theorem A.1 in \cite{heiny:mikosch:2017:iid}
\beao
J_{j1}&\le &\P\Big(s_j^2\sum_{t=1}^{(1+\delta) \E[M_j]}Z_t^2>xb_n^2(1-\epsilon)\Big)\sim (1+\delta) \E[M_j]
\P(s_j^2 Z^2 >x b_n^2(1-\epsilon))\\&\sim& x^{-\alpha/2}\dfrac{1+\delta}{(1-\epsilon)^{\alpha/2}} n\,s_j^\alpha q_j^{(n)}\P(|Z|>b_n)\,,\\
J_{j1}&\ge &\P\Big(s_j^2\sum_{t=1}^{(1-\delta) \E[M_j]}Z_t^2>xb_n^2(1-\epsilon)\Big)
\sim  x^{-\alpha/2}\dfrac{1-\delta}{(1-\epsilon)^{\alpha/2}} n\,s_j^\alpha q_j^{(n)}\P(|Z|>b_n)\,.
\eeao
Letting $\delta\downarrow 0$ and recalling the definition of $b_n$, we conclude that
\beao
\lim_{\epsilon\to 0}\limsup_{\nto} p\,I_1=\lim_{\epsilon\to 0}\limsup_{\nto} p\,\sum_{j=1}^m J_j= x^{-\alpha/2}\,.
\eeao
Our next goal is to show that $pI_2\to 0$.
Consider a disjoint partition for small $\delta>0$ and $j=1,\ldots,m$,
\beao
B_{1}&=&\bigcup_{1\le i<j\le m}\big\{s_i^2T_i>\delta b_{n}^2\,,s_j^2T_j>\delta b_{n}^2\big\}\,,\\
B_{2}&=&\bigcup_{j=1}^m \big\{s_j^2T_j>\delta b_{n}^2\,, s_i^2T_i\le\delta b_{n}^2\,,
i\ne j\,,i=1,\ldots,m\big\}\,,\\
B_{3}&=&\big\{ \max_{j\le m} s_j^2T_j\le \delta b_{n}^2\big\}\,.
\eeao
We have 
\begin{equation*}
p\,\P(B_1) \le p \sum_{1\le i<j\le m} \P(s_i^2T_i>\delta b_{n}^2\,,s_j^2T_j>\delta b_{n}^2)\,.
\end{equation*}
To show that the \rhs~converges to $0$, we proceed as for $J_j.$ 
For $i<j$ we replace  the random indices $M_i$ and $M_j$ in $T_i$ and $T_j$ by their 
corresponding expectations. We omit further details. Abusing notation here and
in what follows, we denote the resulting 
modified quantities by the same symbols $T_i$ and $T_j$.
After this operation, $T_i$ and $T_j$ are independent and we can treat their tail \pro ies
in the same way as for $J_j$, yielding $\lim_{\nto}p\P(B_1)=0$.
\par
Next we observe that
\beao
\P(\{ |S-s_j^2T_j|>\epsilon b_n^2\,,j\le m\}\cap B_2)
&\le &\sum_{j=1}^m\P( |S-s_j^2T_j|>\epsilon b_n^2\,,s_j^2T_j>\delta b_n^2)\,.
\eeao
Now proceed as for $J_j$: replace all $M_j$ by $\E[M_j]$ in each \pro y in the sum.  Then the modified sums $S-s_j^2T_j$ and $s_j^2T_j$ become
independent. Using the independence, we see that
\beao\lefteqn{
\limsup_{\nto} p\,\P(\{ |S-s_j^2T_j|>\epsilon b_n^2\,,j\le m\}\cap B_2)}\\
&=&\limsup_{\nto} \dfrac 1p\sum_{j=1}^m \big(p\,\P( |S-s_j^2T_j|>\epsilon b_n^2)\big)\,\big(p\,\P(s_j^2T_j>\delta b_n^2)\big)
=0\,.
\eeao

Finally, we deal with 
\beao
\P(\{ |S-s_j^2T_j|>\epsilon b_n^2\,,j\le m\}\cap B_3)&\le& 
\P\Big(b_n^{-2}\sum_{j=1}^m s_j^2T_j \1(s_j^2T_j\le \delta b_n^2) >\epsilon\Big)\\
&\le &\sum_{j=1}^m \P\Big(b_n^{-2} s_j^2T_j \1(s_j^2T_j\le \delta b_n^2) >\epsilon/m\Big)\,.
\eeao
Since we can choose $\delta$ independently from $\epsilon$, we can take $\delta <\epsilon/m$, making the \rhs\ vanish.
Combining all the previous bounds, we finally arrived at 
\beao
\limsup_{\nto} p\,\P(S>x b_n^2)\le x^{-\alpha/2}\,,\qquad x>0\,,
\eeao
in the case $\alpha\in (0,2)$. In the case $\alpha\in (2,4)$ we have to center
the quantities $S$ and $T_j$. Then the same ideas of the proof apply, in particular the large deviations results
of Theorem A.1 in \cite{heiny:mikosch:2017:iid}. We omit details.
\par
Next consider, for $\alpha\in (0,2)$,
\beam\label{eq:lowerbound}
\P\big(S>x\,b_n^2\big)&\ge& \P\big(s_j^2 T_j>xb_n^2(1+\epsilon)\,,
|S-s_j^2T_j|\le \epsilon b_n^2 \,\mbox{for some $j\le m$}\big)\nonumber\\
&\ge &\sum_{j=1}^m \P\big(s_j^2 T_j>xb_n^2(1+\epsilon)\,,
|S-s_j^2T_j|\le \epsilon b_n^2 \big)\nonumber\\
&&-\sum_{1\le i<j\le m} \P\big(s_i^2 T_i>xb_n^2(1+\epsilon)\,,
s_j^2 T_j>xb_n^2(1+\epsilon)\big)\,.
\eeam
We proceed as before: we replace the numbers $M_j$ by their expecations. After this operation the modified sums
$s_j^2T_j$, $S-s_j^2T_j$  and $s_i^2T_i$ for $i\ne j$ become independent. Moreover, $\P(|S-s_j^2T_j|\le \epsilon b_n^2)\to 1$. Hence for fixed small $\delta>0$ and large $n$,
\beao
\P\big(S>x\,b_n^2\big)\ge (1-\delta)\sum_{j=1}^m\P\big(s_j^2 T_j>xb_n^2(1+\epsilon)\big)\,.
\eeao
Applying  Theorem A.1 in \cite{heiny:mikosch:2017:iid} and letting $\epsilon,\delta$ go to zero, 
we proved that
\beao
\liminf_{\nto} p\,\P\big(S>x\,b_n^2\big)\ge x^{-\alpha/2}\,, \quad x>0\,.
\eeao
\par
Our next goal is to consider case (2) in which $0<\tau_j<\infty$  for all $j$.
We will show that
\beam
p\,\P( S>xb_n^2) &\sim& p\,\sum_{j=1}^m \P(s_j^2 T_j>xb_n^2)\nonumber\\&\sim& 
p\,\sum_{j=1}^ms_j^\alpha\,\E[M_j] \P(Z^2>xb_n^2)\label{eq:t1}\\
&=&pn \E[(\sigma^{(n)})^\alpha] \P(Z^2>xb_n^2)\to x^{-\alpha/2}\,.\nonumber
\eeam
We have $M_j\std Y_j\sim \Pois(\tau_j)$ as $\nto$, in particular $\P(M_j=k)\to \pi_k^{(j)}=\P(Y_j=k)$
and $\P(M_j=k)\le c\,\ex^{-hk}$, $k\ge 1$, $h>0$; see \cite{embrechts:kluppelberg:mikosch:1997} p.~41, equation (1.31).
Keeping this in mind, subexponentiality of the \ds\ of $Z^2$  yields 
\beam\label{eq:green}
\dfrac{\P(s_j^2T_j>x b_n^2)}{\P(Z^2>b_n^2)}&=& \sum_{k=1}^\infty \P(M_j=k)\,\dfrac{\P\big(s_j^2\sum_{t=1}^kZ_t^2>xb_n^2\big)}
{\P(Z^2>b_n^2)}\nonumber\\
&\to &  s_j^\alpha\sum_{k=1}^\infty \pi_k^{(j)}\,k =s_j^\alpha\E[M_j]\,,\qquad \nto\,.
\eeam
For the upper bound in \eqref{eq:t1} 
we recall the inequality \eqref{eq:upper}. In view of \eqref{eq:green} and \regvar\ of $Z^2$, for the upper bound it 
remains to show that
\beam\label{eq:ss1}
\dfrac{\P(s_j^2T_j>b_n^2\,,j=1,\ldots,m)}{\P(Z^2>b_n^2)}&\to& 0\,.
\eeam
We show \eqref{eq:ss1} only for $m=2$. We have
\begin{equation*}
\begin{split}
\P(s_1^2T_1 &> b_n^2,s_2^2T_2>b_n^2)=\sum_{k,l=1}^\infty \P(M_1=k,M_2=l)\, \P\big(s_1^2\sum_{t=1}^k Z_t^2>b_n^2\big)\P\big(s_2^2\sum_{t=1}^l Z_t^2>b_n^2\big)\\
&\le \sum_{k=1}^\infty \sqrt{\P(M_1=k)} \P\big(\sum_{t=1}^k Z_t^2>b_n^2\big)\sum_{l=1}^\infty \sqrt{\P(M_2=l)}\P\big(\sum_{t=1}^l Z_t^2>b_n^2\big)\,.
\end{split}
\end{equation*}
The same arguments which established \eqref{eq:green} show that the \rhs\ is of the order $O((\P(Z^2>b_n^2))^2)$. This proves \eqref{eq:ss1}.
\par
The lower bound in \eqref{eq:t1} follows by similar arguments, taking into
account the inequality \eqref{eq:lowerbound}.

\end{proof}

\section*{Acknowledgments}\setcounter{equation}{0}

We thank Richard Davis, Olivier Wintenberger and Mark Podolskij for inspiring discussions.

\bibliographystyle{acm}
{ \bibliography{libraryjohannes}}
\end{document}